\renewcommand*{\backref}[1]{}
\renewcommand*{\backrefalt}[4]{%
    \ifcase #1 (Not cited.)%
    \or        (Cited on page~#2.)%
    \else      (Cited on pages~#2.)%
    \fi}
\newtheorem{thm}{\bf Theorem}
\newtheorem{rem}{ Remark}[section]
\newcommand{\diff}{\mathop{}\!\mathrm{d}}
\newcommand{\bfn}{\mathbf{n}}
\title{
A Fast Minimization Algorithm for the Euler Elastica Model Based on a Bilinear Decomposition
\thanks{
\funding{The work of the first and last authors is partially supported by the NSFC grants (Nos. 12271404,  11871372,  11501413, and 12301545) and PHD Program 52XB2013 of Tianjin Normal University. The work of Xue-Cheng Tai is partially supported by NSFC/RGC grant N-HKBU214-19 and NORCE Kompetanseoppbygging program.}
}
}
\author{Zhifang Liu\thanks{School of Mathematical Sciences, Tianjin Normal University, Tianjin 300387, China (\email{matlzhf@tjnu.edu.cn}, \email{bcsun\_tj@163.com}).}
\and Baochen Sun\footnotemark[2]
\and Xue-Cheng Tai\thanks{Norwegian Research Center (NORCE), Nygårdsgaten 112, 5008 Bergen, Norway  (\email{xtai@norceresearch.no}).}
\and Qi Wang\thanks{Department of Mathematics,  University of South Carolina, Columbia, SC 29208, USA (\email{qwang@math.sc.edu}).}
\and Huibin Chang\thanks{Corresponding author. School of Mathematical Sciences, Tianjin Normal University, Tianjin 300387, China (\email{changhuibin@tjnu.edu.cn}).}
}
\begin{document}

\maketitle

\begin{abstract}
The Euler Elastica (EE) model with surface curvature can generate artifact-free results compared with the traditional total variation regularization model in image processing. However, strong nonlinearity and singularity due to the curvature term in the EE model pose a great challenge for one to design fast and stable algorithms for the EE model. In this paper,  we propose a new, fast, hybrid alternating minimization (HALM) algorithm for the EE model based on a  bilinear decomposition of the gradient of the underlying image and prove the global convergence of the minimizing sequence generated by the algorithm under mild conditions. The HALM algorithm comprises three sub-minimization problems and each is either solved in the closed form or approximated by fast solvers making the new algorithm highly accurate and efficient. 
We also discuss the extension of the HALM strategy to deal with general curvature-based variational models, especially with a Lipschitz smooth functional of the curvature. A host of numerical experiments are conducted to show that the new algorithm produces good results with much-improved efficiency compared to other state-of-the-art algorithms for the EE model. As one of the benchmarks, we show that the average running time of the HALM algorithm is at most one-quarter of that of the fast operator-splitting-based Deng-Glowinski-Tai algorithm.
\end{abstract}

\begin{keywords}
    Euler Elastica; Hybrid alternating minimization algorithm; Regularization; Bilinear decomposition of the gradient; Curvature-based variational models
\end{keywords}

\begin{MSCcodes}
 46N10; 47N10; 65K10; 68U10
\end{MSCcodes}


\section{Introduction}

We consider  solving the Euler Elastica (EE) model numerically  for image $u$ defined in $\mathcal{C}^2(\Omega;\mathbb{R})$, $\Omega\subset\mathbb{R}^2$, i.e.,
\begin{align}
\min_{u}\int_{\Omega} \Bigg(a + b\bigg(\operatorname{div} \frac{\nabla u}{|\nabla u|}\bigg)^2 \Bigg)|\nabla u|\diff x\diff y + \frac{1}{2} \int_{\Omega} (u-f)^2\diff x \diff y,
\label{originalEEM}
\end{align}
where $a$ and $b$ are two  positive parameters, and $f$ is a given noisy image. The first term, known as the EE energy \cite{Mumford1994Elastica,Masnou1998Level,Chan2002Euler}, regularizes the lengths and curvatures of the level curves in the image and thus warrants strong continuity of the edges of underlying image $u$. The second term denotes the fidelity term that measures the difference between given noisy image $f$ and its denoised approximation $u$. Such a model has been used for image restoration tasks \cite{Tai2011AugmentedLagrangian,Bredies2015convex,2019DengOS}.
However, the energy functional in  \eqref{originalEEM} has a strongly nonlinear term as well as a singularity at places where the gradient vanishes. This poses a great challenge for one to optimize it.

Traditionally, algorithms for the EE model have been designed directly based on the gradient flow method \cite{Chan2002Euler,RN2Split_Model,Ringholm2018Variational}.
The Chan-Kang-Shen (CKS) method \cite{Chan2002Euler} used a gradient descent scheme for the fourth-order nonlinear Euler-Lagrange equation of the EE model.  In order to meet the Courant-Friedrichs-Lewy condition, a small step size needs to be chosen, leading to slow convergence of the CKS method. An accelerated version of the CSK method was proposed by Yashtini and Kang  \cite{RN2Split_Model} via the Nesterov's technique \cite{Nesterov1983method}.
Ringh{\o}lm, Lazi\'{c}, and Sch\"{o}nlieb introduced the discrete gradient scheme to the gradient flow of the smooth variant of the EE model based on the Itoh–Abe discrete gradient scheme \cite{Ringholm2018Variational}. Wang et al. \cite{wang2022efficient} proposed efficient scalar auxiliary variable algorithms for more general curvature minimization problems.
In order to deal with nonconvexity of \eqref{originalEEM},  convex approximations \cite{Bredies2015convex} or relaxation methods \cite{Chambolle2019Total} were used to reformulate the EE energy in higher dimensional spaces.
Bredies, Pock, and Wirth in \cite{Bredies2015convex} proposed a convex, lower semi-continuous, coercive approximation of the EE energy by functional lifting of the image gradient, and showed some promising results using a tailored discretization of measures. In the work by Chambolle and Pock \cite{Chambolle2019Total}, the EE energy was represented through a convex functional defined on divergence-free vector fields, and successfully applied to diverse shape and image processing tasks utilizing a staggered grid discretization baseed on an averaged Raviart-Thomas finite element approximation.

The variable splitting method  was developed to address issues of strong nonlinearity, nonsmoothness, and singularity of \eqref{originalEEM} in \cite{Tai2011AugmentedLagrangian,Duan2012Fast,Zhu2013Image,Zhang2017Fast,RN2Split_Model,2019DengOS,Liu2019Proximal,Liu2021variable,He2021Penalty,Zhang2022Image}.
By introducing auxiliary variables, Tai, Hahn, and Chung in \cite{Tai2011AugmentedLagrangian} proposed a sophisticated relaxation of the EE model and reformulated the energy minimization problem into an equivalent constrained optimization problem, which was then solved by an augmented Lagrangian method \cite{glowinski1989augmented,wu2010augmented}.
Inspired by this work, there have been extensive studies of variable splitting and augmented Lagrangian method for the EE model.
Duan, Wang, and Hahn \cite{Duan2012Fast} introduced one more auxiliary variable for the mean curvature and presented a new augmented Lagrangian method.
Zhang and Chen in \cite{Zhang2016New} proposed an augmented Lagrangian primal-dual algorithm for the EE model.
Yashtini and Kang in \cite{RN2Split_Model} relaxed the normal vector in the curvature term of the EE model and developed the relaxed normal two-split (RN2Split) method.
 Zhang et al. in \cite{Zhang2017Fast} and Liu et al. in \cite{Liu2019Proximal} improved the classical augmented Lagrangian method for the EE model via a linearization technique and the proximal method, respectively.
Recently, based on the Lie scheme, Deng, Glowinski, and Tai \cite{2019DengOS} proposed a stable and fast operator-splitting algorithm dubbed the DGT algorithm.  
Liu and his collaborators extended the DGT algorithm to a color EE model \cite{Liu2021Color} and the Gaussian curvature regularization model \cite{Liu2022Gaussian}.
He, Wang, and Chen in \cite{He2021Penalty}, adopted a smoothed constrained relaxation model of \eqref{originalEEM} and proposed a convergent penalty relaxation algorithm (dubbed the HWC algorithm) for the discrete EE model.

Although most variable-splitting methods  can solve the EE model efficiently, their theoretical convergence is difficult to prove due to the complex relations  among the coupled auxiliary variables. Especially in \cite{2019DengOS},  
the  objective functionals based on the Marchuk--Yanenko  scheme \cite{Glowinski2016} for the DGT algorithm are nonsmooth and nonconvex such that it is still unclear how to prove the local convergence to the  stationary points of the EE model.  Numerical oscillations observed from the relative error (See Figures 4, 5 in \cite{2019DengOS}) may affect the convergence speed. By introducing an additional auxiliary variable for the Hessian matrix,  an operator-splitting algorithm was developed for a more complicated Gaussian-curvature regularized model recently \cite{Liu2022Gaussian}, which was also  based on the Marchuk--Yanenko scheme. However, a rigorous proof of convergence is still elusive and some numerical oscillations remain. 
We note that the HWC algorithm guarantees convergence but could inflict a high computational cost. It used the lagged diffusivity fixed point iteration and the scheme for the subproblems has to update the coefficient matrices per inner loop. 

This paper proposes a new convergent variable-splitting algorithm for the EE model \eqref{originalEEM} based on a simple yet powerful regularizing, bilinear decomposition. Notice that gradient $\nabla u$ of the underlying image $u$ can be expressed as follows 
\[
\nabla u=|\nabla u| \frac{\nabla u}{|\nabla u|}.
\]
Denoting $q:=|\nabla u|$ and  $\vec{n}:=\frac{\nabla u}{|\nabla u|}$,   the gradient is equivalent  to the following  bilinear decomposition  
\[
\nabla u=q \vec{n}\qquad \mbox{with~ } q\geq 0, \ \ |\vec{n}|=1,
\] 
with two additional constraints.
Using the decomposition, the original EE model can be reformulated as a smooth optimization problem with a non-negative and a unit-length constraint.  Namely, the EE energy is expressed by a smooth functional of normal vector $\vec{n}$ and magnitude $q$. The singularity in the energy disappears and the strong nonlinearity is also mitigated through the bilinear decomposition. When applying it to image denoising, we further simplify the problem by penalizing the bilinear relation, i.e. adding term $\alpha \|\nabla u-q \vec{n}\|^2$ to the energy with a large $\alpha>0$ instead of imposing the bilinear decomposition through exact penalization or saddle-point problem traditionally used in constrained optimization. This simplification with a fixed $\alpha>0$ significantly reduces the computational cost.

The reformulation of the EE energy based on a bilinear decomposition of the gradient, relaxation in the constraints through penalties, and the variable splitting technique effectively eliminates the singularity, mitigates the nonlinearity in the original EE model, and improves the regularity of the reformulated energy (objective functional).
We then use the finite difference method to discretize the EE energy in space to arrive at a  reformulated, discretized EE model. We develop the fast, steadily convergent, hybrid alternating minimization algorithm (HALM) for the discrete EE model. Each subproblem in the new optimization algorithm is solved either in a closed form or approximated efficiently by fast solvers. Under mild conditions, the algorithm is shown to produce a globally convergent minimizing sequence. 
We discuss the same strategy extended to a list of   general curvature-based models, where the corresponding convergence is guaranteed with an $L-$Lipschitz smooth curvature term.

Finally, we conduct a host of experiments to show the effectiveness of the algorithms for image denoising. Compared to the other state-of-the-art algorithms, including the DGT and HWC algorithms, the new algorithm converges much faster for both the EE model and general curvature-based model. In benchmark examples, we show that this algorithm requires only one-quarter of the running time to reach the same given tolerance compared to the other fast operator-splitting-based DGT algorithms. In addition, the algorithm is easy to implement numerically since no staggered grid discretization is needed, and only two algorithmic parameters need to be tuned.  This essentially three-step  algorithm of low computational complexity and costs 
has a great potential to be applied to other curvature-based models.

The rest of the paper is organized as follows. Section \ref{sec-model} presents the reformulated model and its discretization. Section \ref{sec-alg} provides the proposed algorithm for the reformulated discrete model with a rigorous convergence proof. It is extended to the generalized curvature-based model in section \ref{sec-extension}. Numerical experiments are conducted in section \ref{sec-num} to validate the proposed algorithm and compare the HALM algorithm with the other state-of-the-art ones.  Finally, we give conclusions
and discuss our future work in section \ref{sec-conclusion}. 

\section{Model reformulation and discretization}\label{sec-model}

In this section, we reformulate the EE model \eqref{originalEEM} with a bilinear decomposition and then present its discrete form.

\subsection{Reformulation of the EE model}

A typical difficulty in dealing with the energy in \eqref{originalEEM} is its weak singularity, i.e. the normal field $\vec{n} = \frac{\nabla u}{|\nabla u|}$ makes no sense at $\{x: |\nabla u(x)| = 0\}$ and may cause numerical instability when the denominator is small.  
By introducing auxiliary variable $q$, one can express the gradient $\nabla u$ by the bilinear decomposition:
\begin{equation}\label{eq:ref2}
\nabla u = q \vec{n},~|\vec{n}| = 1,~q\geq 0, 
\end{equation}
and reformulate EE model \eqref{originalEEM} into the following equivalent minimization problem
\begin{equation}
\begin{aligned}
&\min_{u,q,\vec{n}}  \int_{\Omega} \big(a + b (\operatorname{div} \vec{n})^2\big)q\diff x\diff y + \frac{1}{2}\int_{\Omega} (u-f)^2\diff x\diff y\\
&\;\; \mbox{s.t.}\;  \nabla u = q \vec{n},~|\vec{n}| = 1,~q\geq 0.
\end{aligned}
\label{reformulateEEM}
\end{equation}
One may notice that not only does the singularity of the objective functional disappears, but the strong nonlinearity is also mitigated.

We remark that one can extend the bilinear decomposition technique to general curvature-based objective functional \cite{Duan_Min_discrete_total_curva}
\begin{equation}\label{general_model-1}
\min_{u} \int_{\Omega} \phi( \operatorname{div} (\tfrac{\nabla u}{|\nabla u|})|\nabla u| \diff x \diff y + \frac{1}{2}\int_{\Omega} (u-f)^2\diff x\diff y,
\end{equation}
with function $\phi$ (Specific forms will be given in section \ref{sec-extension}).
Based on the bilinear decomposition given in \eqref{eq:ref2}, one immediately turns the above minimization problem into an equivalent one as   follows:
\begin{equation}\label{general_model_bilinear-1}
\begin{aligned}
&\min_{u,\vec{n}}  \int_{\Omega} \phi(\operatorname{div} \vec{n}) q \diff x\diff y + \frac{1}{2}\int_{\Omega} (u-f)^2\diff x\diff y,\\
&\;\; \mathrm{s.t.}\;  \nabla u = q \vec{n},~|\vec{n}| = 1,~q\geq 0.
\end{aligned}
\end{equation}

\subsection{Discretization and penalty relaxation}

We consider a meshed rectangle domain: $\Omega  = [x_1, x_{N}]\times [y_1, y_{N}]$ with mesh sizes $\delta x = (x_{N}-x_1)/(N-1)=1$ and $\delta y = (y_{N}- y_1)/(N-1)=1$. The corresponding discrete image domain $\Omega_d$ is defined by
\[
\Omega_d = \left\{\left(x_i, y_j\right):x_i = x_1 + (i-1)\delta x, y_j = y_1 + (j-1)\delta y,~i,j = 1, \ldots, N \right\}.
\]
We represent image $u$ as a matrix with entries $u(x_i,y_j) (i,j = 1, \ldots, N)$ defined in $\Omega_d$.
For convenience, we rearrange this matrix into a vector $\mathbf{u} = (u_1,u_2,\ldots,u_{N^2})^{\mathrm{T}}\in\mathbb{R}^{N^2}$ by the lexicographical column ordering.
Given $\bfn_1 = (n_{1,1},n_{1,2},\ldots,n_{1,N^2})^{\mathrm{T}}$ and  $\bfn_2 = (n_{2,1},n_{2,2},\ldots,n_{2,N^2})^{\mathrm{T}}$, belonging to $\mathbb{R}^{N^2}$,
we denote
$\bfn :=\begin{bmatrix}
\bfn_1 \\
\bfn_2
\end{bmatrix} \in \mathbb{R}^{2 N^2}$.
Analogously, vectors $\mathbf{q}\in\mathbb{R}^{N^2}$ and $\mathbf{f}\in\mathbb{R}^{N^2}$ are defined.

We introduce two $N \times N$ matrices $\mathbf{D}_1$ and $\mathbf{D}_2$   as follows
\begin{equation*}
\mathbf{D}_1=
\begin{bmatrix}
-1  & 1 & & \\ & -1 & \ddots & \\ & & \ddots & 1 \\ 1 & & & -1
\end{bmatrix},~
\mathbf{D}_2= -\mathbf{D}_1^{\mathrm{T}},
\end{equation*}
and four $N^2 \times N^2$ matrices $\mathbf{D}_x^{+}$, $\mathbf{D}_y^{+}$, $\mathbf{D}_x^{+}$ and $\mathbf{D}_y^{+}$:
\[ \mathbf{D}_x^{+} = \mathbf{I} \otimes \mathbf{D}_1, ~
\mathbf{D}_y^{+} = \mathbf{D}_1 \otimes \mathbf{I}, ~
\mathbf{D}_x^{-} = \mathbf{I} \otimes \mathbf{D}_2 , ~
\mathbf{D}_y^{-} = \mathbf{D}_2 \otimes \mathbf{I},\]
where $\mathbf{I} \in \mathbb{R}^{N\times N}$ is the identity matrix, and the symbol $\otimes $ denotes the Kronecker product of two matrices.
Readily, one can see that
\begin{equation}
\mathbf{D}_x^{-} = - (\mathbf{D}_x^{+})^{\mathrm{T}},\quad \mathbf{D}_y^{-} = - (\mathbf{D}_y^{+})^{\mathrm{T}}. \label{forwardBackwardRelation}
\end{equation}
Assuming the periodic boundary condition for $u$, we have
\[
u(x_1, y_j) = u(x_{N+1}, y_j), \; u(x_i, y_1) = u(x_i, y_{N+1}),\ \ i,j = 1, \ldots, N.
\]
Gradient $\nabla $ and divergence $\operatorname{div} $  are discretized as follows
\[
\nabla\mathbf{u}=
\begin{bmatrix}
\mathbf{D}_x^{+}\mathbf{u} \\
\mathbf{D}_y^{+}\mathbf{u}
\end{bmatrix}
\in \mathbb{R}^{2N^2}, ~
\operatorname{div}\bfn=\mathbf{D}_x^{-}\bfn_1 +\mathbf{D}_y^{-}\bfn_2 \in \mathbb{R}^{N^2}.
\]

\begin{rem}
In variational image processing, the periodic boundary condition is commonly used for historical reasons as well as its benefits in enabling the use of fast discrete Fourier transforms to efficiently solve the linear systems associated with the $\mathbf{u}$-subproblem as in the DGT and HWC algorithms. The Neumann boundary condition could also be a natural choice for boundary conditions. With Neumann boundary conditions, the $\mathbf{u}$-subproblem can be solved via efficient methods like conjugate gradient (CG) descent and sparse Cholesky factorization.
In this work, we will primarily use the periodic boundary condition by default. However, we will also explain how to handle Neumann boundary conditions where appropriate. 
\end{rem}

Denote the $i$th row of  the matrix $\mathbf{D}_x^{-}$ (or $\mathbf{D}_y^{-}$) as $\mathbf{D}_{x,i}^{-}$ (or $\mathbf{D}_{y,i}^{-}$).
Using the notations above, we obtain the discrete form of \eqref{reformulateEEM}:
\begin{equation}
\begin{aligned}
& \min_{\mathbf{u},\bfn,\mathbf{q}}  E(\mathbf{u},\bfn,\mathbf{q}) = \sum\nolimits_{i=1}^{N^2} (a + b |\mathbf{D}_{x,i}^{-}\bfn_1+\mathbf{D}_{y,i}^{-}\bfn_2|^2)q_i + \frac{1}{2} \left\| \mathbf{u}-\mathbf{f}\right\|^2\\
&\quad \; \text{s.t.} \ \ \mathbf{D}_x^{+} \mathbf{u} =\mathbf{q}\odot \bfn_1,~\mathbf{D}_y^{+} \mathbf{u} =\mathbf{q}\odot \bfn_2,\\
&\qquad\quad\  \bfn_1\odot\bfn_1 + \bfn_2\odot\bfn_2 = \mathbf{1},\\
&\qquad\quad \ \mathbf{q}\geq \mathbf{0},
\end{aligned}
\label{reformulateEEM_discretization}
\end{equation}
where $\mathbf 1$ represents a vector whose elements are all ones, and $\|\cdot\|$ represents the standard $\ell^2$ norm, $\odot$ denotes component-wise product, and $\mathbf{q}= (q_1,q_2,\ldots,q_{N^2})^{\mathrm{T}} \geq\mathbf{0}$ denotes the non-negativity of each element.
There exists a large family of efficient first-order operator-splitting algorithms \cite{Glowinski2016} to solve discrete model \eqref{reformulateEEM_discretization} with the bilinear constraint.

Instead of strictly enforcing the constraints, we penalize the bilinear constraint in the objective functional and optimize the augmented objective functional using the alternating minimization method.
Specifically, we approximate the constrained minimization problem in \eqref{reformulateEEM_discretization} by an unconstrained optimization problem given below
\begin{equation}
\min_{\mathbf{u},\bfn,\mathbf{q}}  E_{\alpha,\mathbb{I}}(\mathbf{u},\bfn,\mathbf{q}),
\end{equation}
where
\begin{equation}
\begin{aligned}
 E_{\alpha,\mathbb{I}}(\mathbf{u},\bfn,\mathbf{q}) & =\sum\nolimits_{i=1}^{N^2} (a + b |\mathbf{D}_{x,i}^{-}\bfn_1+\mathbf{D}_{y,i}^{-}\bfn_2|^2)q_i + \frac{1}{2} \left\| \mathbf{u}-\mathbf{f}\right\|^2 \\
& \quad + \frac{\alpha}{2} \left(\|\mathbf{D}_x^{+} \mathbf{u} -\mathbf{q}\odot \bfn_1\|^2 + \| \mathbf{D}_y^{+} \mathbf{u} -\mathbf{q}\odot \bfn_2\|^2 \right)\\
& \quad + \mathbb{I}_{\mathcal{S}}(\bfn) + \mathbb{I}_{\mathbb{R}_{+}^{N^2}}(\mathbf{q}),
\label{DisrelaxationREEM-1}
\end{aligned}
\end{equation}
\[
\mathcal{S}=\left\{\bfn =\begin{bmatrix}
\bfn_1 \\
\bfn_2
\end{bmatrix}:\mathbf{n}_1, \mathbf{n}_2 \in \mathbb{R}^{N^2}, \mathbf{n}_1\odot\mathbf{n}_1 + \mathbf{n}_2\odot\mathbf{n}_2 = \mathbf{1}\right\},
\]
$\alpha$ is a positive parameter, $\mathbb{R}_{+}^{N^2}=\{\mathbf{p} \in \mathbb{R}^{N^2} :\mathbf{p} = (p_1,p_2,\ldots,p_{N^2})^{\mathrm{T}},\; p_i\geq 0, \; \forall i=1,2,\cdots,N^2\}$,
and the indicator function $\mathbb I$ is defined by
\[\mathbb{I}_{\mathcal{A}}(s)=
\left\{
\begin{aligned}
&0,& \mbox{if~} s\in \mathcal A;\\
&+\infty,&\mbox{otherwise.}
\end{aligned}
\right.
\]

\begin{thm}
Model \eqref{DisrelaxationREEM-1} has at least one minimizer.
\end{thm}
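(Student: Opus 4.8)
## Proof Plan

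The plan is to apply the direct method in the calculus of variations. Since the functional $E_{\alpha,\mathbb{I}}$ is a sum of nonnegative terms (the quadratic fidelity term, the penalty terms, the curvature–weighted length term with $a,b>0$ and $q_i\geq 0$ on the effective domain, and the two indicator functions), it is bounded below by $0$. Hence $m := \inf E_{\alpha,\mathbb{I}} \geq 0$ is finite, and we may take a minimizing sequence $\{(\mathbf{u}^k,\bfn^k,\mathbf{q}^k)\}$ with $E_{\alpha,\mathbb{I}}(\mathbf{u}^k,\bfn^k,\mathbf{q}^k)\to m$. Discarding finitely many terms, we may assume $E_{\alpha,\mathbb{I}}(\mathbf{u}^k,\bfn^k,\mathbf{q}^k)\leq m+1$ for all $k$; in particular each iterate lies in the effective domain, so $\bfn^k\in\mathcal{S}$ and $\mathbf{q}^k\geq\mathbf{0}$.

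First I would extract a priori bounds to obtain compactness. The constraint $\bfn^k\in\mathcal{S}$ means $\bfn_1^k\odot\bfn_1^k+\bfn_2^k\odot\bfn_2^k=\mathbf{1}$ componentwise, so $\|\bfn^k\|^2=N^2$, i.e. $\{\bfn^k\}$ is bounded. The fidelity term gives $\tfrac12\|\mathbf{u}^k-\mathbf{f}\|^2\leq m+1$, so $\{\mathbf{u}^k\}$ is bounded. For $\mathbf{q}^k$: from the penalty term, $\tfrac{\alpha}{2}\|\mathbf{D}_x^{+}\mathbf{u}^k-\mathbf{q}^k\odot\bfn_1^k\|^2\leq m+1$ and likewise for the $y$-component, so $\mathbf{q}^k\odot\bfn_1^k$ and $\mathbf{q}^k\odot\bfn_2^k$ are bounded (since $\mathbf{D}_x^{+}\mathbf{u}^k,\mathbf{D}_y^{+}\mathbf{u}^k$ are bounded). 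Componentwise, $(q_i^k)^2=(q_i^k n_{1,i}^k)^2+(q_i^k n_{2,i}^k)^2$ using $|\bfn^k|=1$ at index $i$, so $\{\mathbf{q}^k\}$ is bounded as well. Thus the whole sequence is bounded in $\mathbb{R}^{N^2}\times\mathbb{R}^{2N^2}\times\mathbb{R}^{N^2}$, and by Bolzano–Weierstrass we pass to a convergent subsequence $(\mathbf{u}^{k_j},\bfn^{k_j},\mathbf{q}^{k_j})\to(\mathbf{u}^*,\bfn^*,\mathbf{q}^*)$.

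Next I would verify feasibility and lower semicontinuity of the limit. The set $\mathcal{S}$ is closed (it is the preimage of $\{\mathbf{1}\}$ under the continuous map $\bfn\mapsto\bfn_1\odot\bfn_1+\bfn_2\odot\bfn_2$), so $\bfn^*\in\mathcal{S}$ and $\mathbb{I}_{\mathcal{S}}(\bfn^*)=0$; similarly $\mathbb{R}_{+}^{N^2}$ is closed, so $\mathbf{q}^*\geq\mathbf{0}$ and $\mathbb{I}_{\mathbb{R}_{+}^{N^2}}(\mathbf{q}^*)=0$. On the effective domain the remaining part of $E_{\alpha,\mathbb{I}}$ is a polynomial in the entries of $(\mathbf{u},\bfn,\mathbf{q})$, hence continuous; therefore $E_{\alpha,\mathbb{I}}(\mathbf{u}^*,\bfn^*,\mathbf{q}^*)=\lim_j E_{\alpha,\mathbb{I}}(\mathbf{u}^{k_j},\bfn^{k_j},\mathbf{q}^{k_j})=m$, which shows $(\mathbf{u}^*,\bfn^*,\mathbf{q}^*)$ is a minimizer. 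The only mildly delicate point—the \emph{main obstacle}—is the boundedness of $\{\mathbf{q}^k\}$: the functional is not coercive in $\mathbf{q}$ by itself (large $q_i$ at a point where $\bfn^k$ is arbitrary would be controlled only through the coupling), so one genuinely needs to use the unit-length constraint $|\bfn^k|=1$ together with the penalty term to recover control of $\|\mathbf{q}^k\|$, as indicated above. Once that estimate is in hand, the rest is the standard compactness-plus-continuity argument. $\qquad\square$
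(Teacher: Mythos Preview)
Your argument is correct and is precisely the direct method the paper invokes (the paper omits the proof and simply cites a standard existence theorem). One simplification: the boundedness of $\{\mathbf{q}^k\}$ is immediate from the first term of the energy, since $a>0$ and $q_i^k\geq 0$ give $a\sum_i q_i^k\leq E_{\alpha,\mathbb{I}}(\mathbf{u}^k,\bfn^k,\mathbf{q}^k)\leq m+1$, so your ``main obstacle'' is not actually an obstacle and the penalty-term detour, while valid, is unnecessary.
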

The proof of the theorem follows that of theorem 1.4.1 in \cite{MinimizerExistenceTheorem} and is thus omitted.
We remark that  penalized or relaxation model \eqref{DisrelaxationREEM-1} is equivalent to the original model \eqref{reformulateEEM_discretization} only when $\alpha \to \infty$ \cite{Nocedal2006Numerical}.  In practical use,  model \eqref{DisrelaxationREEM-1} with a sufficiently large parameter $\alpha$ could generate reasonable denoising results, which is also validated  numerically.


\section{Hybrid alternating minimization algorithm and its convergence}\label{sec-alg}

In this section, we present an efficient hybrid alternating minimization method for problem \eqref{DisrelaxationREEM-1}  and prove the global convergence of the iterative sequence.

We introduce a smooth function
\begin{align}
E_{\alpha}(\mathbf{u},\bfn,\mathbf{q})  &= \sum\nolimits_{i=1}^{N^2} (a + b |\mathbf{D}_{x,i}^{-}\bfn_1+\mathbf{D}_{y,i}^{-}\bfn_2|^2)q_i + \frac{1}{2} \left\| \mathbf{u}-\mathbf{f}\right\|^2 \label{eq-auxiliary-function}\\
&\quad + \frac{\alpha}{2} \big(\|\mathbf{D}_x^{+} \mathbf{u} -\mathbf{q}\odot \bfn_1\|^2 + \| \mathbf{D}_y^{+} \mathbf{u} -\mathbf{q}\odot \bfn_2\|^2 \big)\nonumber
\end{align}
to rewrite the objective function in \eqref{DisrelaxationREEM-1} as
\begin{equation*}
E_{\alpha,\mathbb{I}}(\mathbf{u},\bfn,\mathbf{q})= E_{\alpha}(\mathbf{u},\bfn,\mathbf{q})  + \mathbb{I}_{\mathcal{S}}(\bfn) + \mathbb{I}_{\mathbb{R}_{+}^{N^2}}(\mathbf{q}).
\end{equation*}
Note that although $E_{\alpha,\mathbb{I}}$ is nonconvex, it is strongly convex with respect to $\mathbf{u}$ and $\mathbf{q}$, respectively. The projection onto the sphere $\mathcal{S}$ is well-defined.
Next, based on the alternating minimization strategy, we present an iterative algorithm to minimize $E_{\alpha,\mathbb{I}}$.

\subsection{Hybrid Alternating Minimization Method}

We consider solving problem \eqref{DisrelaxationREEM-1} by  minimizing objective function \eqref{DisrelaxationREEM-1}  with respect to variables $\mathbf{u}$, $\bfn$, and $\mathbf{q}$ alternately.
Supposing that we have  $(\mathbf{u}^k,\mathbf{n}^k,\mathbf{q}^k)$ at the current $k$-iteration,
the proposed  {\bf H}ybrid  {\bf Al}ternating {\bf M}inimization method (HALM) for solving the bilinear decomposition-based EE model \eqref{DisrelaxationREEM-1} updates the variables  $(\mathbf{u}^{k+1},\mathbf{n}^{k+1},\mathbf{q}^{k+1})$ alternatively as follows
\begin{equation*}
\left\{
\begin{aligned}
\mathbf{u}^{k+1}& =\arg\min_{\mathbf{u}}E_{\alpha,\mathbb{I}}(\mathbf{u},\mathbf{n}^k,\mathbf{q}^k),\vspace{2ex}\\
\mathbf{n}^{k+1}& =\arg\min_{\mathbf{n}} \mathbb{I}_{\mathcal{S}}(\mathbf{n}) +
\frac{1}{2\tau_k} \| \mathbf{n} - \left( \mathbf{n}^k - \tau_k \nabla_{\mathbf{n}} E_{\alpha}(\mathbf{u}^{k+1},\mathbf{n}^k,\mathbf{q}^k) \right) \|^2,\vspace{2ex}\\
\mathbf{q}^{k+1}&=\arg\min_{\mathbf{q}}E_{\alpha,\mathbb{I}}(\mathbf{u}^{k+1},\mathbf{n}^{k+1},\mathbf{q}),
\end{aligned}
\right.
\end{equation*}
where $\tau_k$ is a positive parameter, $ E_{\alpha}$ is defined in \eqref{eq-auxiliary-function}, and
\[
\nabla_{\mathbf{n}} E_{\alpha}(\mathbf{u},\mathbf{n},\mathbf{q}):=
\begin{bmatrix}
    \nabla_{\bfn_1} E_{\alpha}(\mathbf{u},\mathbf{n},\mathbf{q})\\
    \nabla_{\bfn_2} E_{\alpha}(\mathbf{u},\mathbf{n},\mathbf{q})
\end{bmatrix}
\in \mathbb{R}^{2N^2}.
\]
Here, we adopt the forward-backward splitting approach to approximately solve the $\mathbf{n}-$subproblem as
\[\mathbf n^{k+1}\approx\min_{\mathbf n} E_{\alpha,\mathbb{I}}(\mathbf{u}^{k+1},\bfn,\mathbf{q}^k).\]
We will see that each subproblem in the HALM algorithm can be efficiently solved numerically or has a closed-form solution.

The $\mathbf{u}$-subproblem is a standard unconstrained quadratic optimization problem
\begin{align}
\mathbf{u}^{k+1} &= \arg\min_\mathbf{u}~ \frac{1}{2}\|\mathbf{u} - \mathbf{f}\|^2 + \frac{\alpha}{2} \big(\|\mathbf{D}_x^{+} \mathbf{u} -\mathbf{q}^k\odot \mathbf{n}_1^k\|^2\label{u_update_opt} \\
&\qquad\qquad\qquad\qquad\qquad\qquad+ \| \mathbf{D}_y^{+} \mathbf{u} -\mathbf{q}^k\odot \mathbf{n}_2^k\|^2 \big).\nonumber
\end{align}
The unique minimum solution is given by 
\begin{align}
 \mathbf{u}^{k+1} &=\big(\mathbf{I} \otimes \mathbf{I}  + \alpha( \mathbf{D}_x^{-}\mathbf{D}_x^{+}+\mathbf{D}_y^{-}\mathbf{D}_y^{+})\big)^{-1}\label{HFBAD_u}\\
&\qquad\times \big(
\mathbf{f} - \alpha \big(\mathbf{D}_x^{-} (\mathbf{q}^k\odot\mathbf{n}_1^k) + \mathbf{D}_y^{-} (\mathbf{q}^k\odot\mathbf{n}_2^k) \big)\big).
\nonumber
\end{align}
One readily sees that the positive definite  matrix $\mathbf{I} \otimes \mathbf{I}  + \alpha( \mathbf{D}_x^{-}\mathbf{D}_x^{+}+\mathbf{D}_y^{-}\mathbf{D}_y^{+})$ is block circulant with circulant blocks (BCCB) such that one can efficiently determine the solution using two-dimensional fast discrete Fourier transform (DFT)~\cite{Vogel2002Computational}. 
We remark that if assuming the Neumann boundary condition for ${\mathbf u}$, the resulting  matrix $\mathbf{I} \otimes \mathbf{I}  + \alpha( \mathbf{D}_x^{-}\mathbf{D}_x^{+}+\mathbf{D}_y^{-}\mathbf{D}_y^{+})$ is positive definite as well, such that one can efficiently obtain the solution of \eqref{HFBAD_u} using efficient methods such as CG and sparse Cholesky factorization.

The $\mathbf{n}$-subproblem is solved by the projection onto the sphere $\mathcal{S}$.
From \eqref{eq-auxiliary-function}, one obtain 
\[
\begin{aligned}
  &~~\nabla_{\mathbf{n}_1} E_{\alpha}(\mathbf{u}^{k+1},\mathbf{n}^k,\mathbf{q}^k)\\
  &= - 2b\mathbf{D}_x^{+}\operatorname{diag}(\mathbf{q}^k)(\mathbf{D}_x^{-}\mathbf{n}_1^k + \mathbf{D}_y^{-}\mathbf{n}_2^k)
  + \alpha \mathbf{q}^k\odot (\mathbf{q}^k\odot\mathbf{n}_1^k - \mathbf{D}_x^{+}\mathbf{u}^{k+1}),
\end{aligned}
\]
where $\operatorname{diag} (\mathbf{q}^k)$ denotes the $N^2 \times N^2$ diagonal matrix with the $i$th diagonal entry $q_i^k$.
Similarly, one has
\[
\begin{aligned}
 &\ \  \nabla_{\mathbf{n}_2} E_{\alpha}(\mathbf{u}^{k+1},\mathbf{n}^k,\mathbf{q}^k) \\
 & = - 2b\mathbf{D}_y^{+}\operatorname{diag}(\mathbf{q}^k)(\mathbf{D}_x^{-}\mathbf{n}_1^k + \mathbf{D}_y^{-}\mathbf{n}_2^k)
 + \alpha \mathbf{q}^k\odot (\mathbf{q}^k\odot\mathbf{n}_2^k - \mathbf{D}_x^{+}\mathbf{u}^{k+1}).
 \end{aligned}
\]
Then denote
\[
	\mathbf{n}^{k+\frac{1}{2}}:= \begin{bmatrix}
	                               \bfn_1^{k+\frac{1}{2}} \\
	                               \bfn_2^{k+\frac{1}{2}}
	                             \end{bmatrix}
	=\mathbf{n}^k - \tau_k \nabla_{\mathbf{n}} E_{\alpha}(\mathbf{u}^{k+1},\mathbf{n}^k,\mathbf{q}^k),
\] 
where 
\begin{align}
\mathbf{n}_1^{k+\frac{1}{2}} & = \mathbf{n}_1^k + \tau_k \big(2b \mathbf{D}_x^{+}\operatorname{diag}(\mathbf{q}^k)(\mathbf{D}_x^{-}\mathbf{n}_1^k + \mathbf{D}_y^{-}\mathbf{n}_2^k)\label{n1halfsimp}
\\
&\qquad\qquad\qquad- \alpha \mathbf{q}^k\odot (\mathbf{q}^k\odot\mathbf{n}_1^k - \mathbf{D}_x^{+}\mathbf{u}^{k+1}) \big),\nonumber
\\
\mathbf{n}_2^{k+\frac{1}{2}} & = \mathbf{n}_2^k + \tau_k
\big( 2b\mathbf{D}_y^{+}\operatorname{diag}(\mathbf{q}^k)(\mathbf{D}_x^{-}\mathbf{n}_1^k + \mathbf{D}_y^{-}\mathbf{n}_2^k)\label{n2halfsimp}
\\
&\qquad\qquad\qquad- \alpha \mathbf{q}^k\odot (\mathbf{q}^k\odot\mathbf{n}_2^k-\mathbf{D}_y^{+}\mathbf{u}^{k+1})\big).
\nonumber
\end{align}
This may not be on the unit sphere. 
We can project it onto the unit sphere by solving
  $N^2-$independent minimization problems:
\begin{equation}
\label{eq:nsubproj}
\big[\mathbf{n}^{k+1}\big]_i= \arg \min_{\mathbf{m}\in \mathcal{S}_0^1}\big\|
\mathbf{m}
- \big[\mathbf{n}^{k+\frac{1}{2}}\big]_i\big\|^2, \;
i = 1,2,\ldots,N^2,
\end{equation}
where
\[
\mathcal{S}_0^1=\{\mathbf{m}=(m_1,m_2)^{\mathrm{T}}\in \mathbb{R}^2:m_1^2 + m_2^2=1\}.
\]
Here, $\big[\mathbf{n}\big]_i = \begin{bmatrix}
n_{1,i} \\
n_{2,i}
\end{bmatrix}$, where $n_{1,i}$ and $n_{2,i}$ denote $i$th components of $\mathbf{n}_1$ and $\mathbf{n}_2$, respectively.
Each problem in \eqref{eq:nsubproj} has a closed-form solution as follows
\begin{equation}\label{HFBAD_n}
\big[\mathbf{n}^{k+1}\big]_i
= \operatorname{Proj}_{\mathcal{S}_0^1}
\big(\big[\mathbf{n}^{k+\frac{1}{2}}\big]_i\big)
:=\begin{cases}
    \frac{\big[\mathbf{n}^{k+\frac{1}{2}}\big]_i}{\big\|\big[\mathbf{n}^{k+\frac{1}{2}}\big]_i \big\|},&\text{if }\big[\mathbf{n}^{k+\frac{1}{2}}\big]_i \neq \mathbf{0},\\
    \mathbf{d},&\text{otherwise},
\end{cases}
\end{equation}
where  $\operatorname{Proj}_{\mathcal{S}_0^1}$ represents the projection operator and $\mathbf{d}$ is an arbitrary unit vector in $\mathcal{S}_0^1$.

The $\mathbf{q}$-subproblem is also separable. For all $i = 1,2,\ldots,N^2$, let
$$c_i = a + b |\mathbf{D}_{x,i}^{-}\mathbf{n}_1^{k+1}+\mathbf{D}_{y,i}^{-}\mathbf{n}_2^{k+1}|^2.$$
Then,  $q_i$-subproblem is described as
\begin{align*}
q_i^{k+1} & = \arg \min_{q_i \geq 0} \quad c_i q_i + \frac{\alpha}{2} \big( (\mathbf{D}_{x,i}^{+} \mathbf{u}^{k+1} - q_i n_{1,i}^{k+1})^2 + ( \mathbf{D}_{y,i}^{+} \mathbf{u}^{k+1} - q_i n_{2,i}^{k+1})^2 \big)\\
& = \arg \min_{q_i \geq 0} \quad c_i q_i + \frac{\alpha}{2} q_i^2  - \alpha q_i (\mathbf{D}_{x,i}^{+} \mathbf{u}^{k+1} n_{1,i}^{k+1} +  \mathbf{D}_{y,i}^{+} \mathbf{u}^{k+1}  n_{2,i}^{k+1}),
\end{align*}
where the last equation is derived based on the unit length of $\mathbf n^{k+1}$.
It is 1-dimensional quadratic programming with a nonnegative constraint, and therefore one can directly derive its closed-form solution as follows
\begin{equation}
q_i^{k+1} = \max\left(0, (\mathbf{D}_{x,i}^{+} \mathbf{u}^{k+1} n_{1,i}^{k+1} +  \mathbf{D}_{y,i}^{+} \mathbf{u}^{k+1}  n_{2,i}^{k+1}) - \frac{c_i}{\alpha}\right).
\label{HFBAD_q}
\end{equation}

We summarize the proposed HALM algorithm for solving \eqref{DisrelaxationREEM-1} in Algorithm \ref{alg:HFBAD}.

\begin{algorithm}[htbp]
	\renewcommand{\algorithmicrequire}{\textbf{Input:}}
	\renewcommand{\algorithmicensure}{\textbf{Output:}}
	\caption{HALM algorithm}
	\label{alg:HFBAD}
	\begin{algorithmic}[1]
		\REQUIRE The observation image $\mathbf{f}$, the model parameters $a,b$, the penalty parameter $\alpha$, and the step size $\{\tau_k\}$.
		\ENSURE $\mathbf{u}^k$.
		\STATE Initialization: $k=0$, 
  $\mathbf{u}^0 = \mathbf{f}$, 
  $\mathbf{n}^0 = \operatorname{Proj}_{\mathcal{S}}\big(((\mathbf{D}_x^{+}\mathbf{u}^0)^T, (\mathbf{D}_y^{+}\mathbf{u}^0)^T)^T\big)$, and $q_i^0 = \| (\mathbf{D}_{x,i}^{+}\mathbf{u}^0,~ \mathbf{D}_{y,i}^{+}\mathbf{u}^0)\|$, $i = 1,2,\ldots,N^2$.
		\WHILE{The termination condition is not satisfied}
		\STATE Compute $\mathbf{u}^{k+1}$ by \eqref{HFBAD_u} using 2D FFT.
		\STATE Compute $\mathbf{n}^{k+1}$ from \eqref{n1halfsimp}, \eqref{n2halfsimp} and \eqref{HFBAD_n}.
		\STATE Compute $\mathbf{q}^{k+1}$ via \eqref{HFBAD_q}.
        \STATE $k\leftarrow k+1$.
		\ENDWHILE
		\STATE \textbf{return} $\mathbf{u}^k$.
	\end{algorithmic}
\end{algorithm}

In the following section, we will prove the convergence of the HALM algorithm when stepsize $\tau_k$ is sufficiently small (say, satisfying \eqref{eq:stepsizecondition}). For practical use, stepsize $\tau_k$ is fixed to a constant heuristically.

\subsection{Convergence analysis of HALM}
We begin with some useful lemmas.
\begin{lemma}[Lemma 3.3 in \cite{zhangjie2020bilinear}]
\label{zhangjie'sLemma}
Let $T(\mathbf{x})= \frac{1}{2}\|\mathbf{Ax}-\mathbf{b}\|^2 + M(\mathbf{x})$, where the function $M$ is convex and the matrix $\mathbf{A}$ is $\mathbf{x}$ -independent. Suppose $\mathbf{x}^*$ is a stationary point of $T(\mathbf{x})$, i.e. $0\in\partial T(\mathbf{x}^*)$, where $\partial T(\mathbf{x}^*)$ is the subdifferential set of the function $T$ at $\mathbf{x}^*$, then we have $$T(\mathbf{x})-T(\mathbf{x}^*)\geq  \frac{1}{2} \|\mathbf{A}(\mathbf{x}-\mathbf{x}^*)\|^2.$$
\end{lemma}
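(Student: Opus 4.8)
The plan is to exploit the fact that $T$ is the sum of a convex quadratic and a convex function, so that the quadratic part admits an \emph{exact} second-order expansion while the nonsmooth convex part contributes a supporting-hyperplane inequality. Write $g(\mathbf{x}) = \tfrac{1}{2}\|\mathbf{A}\mathbf{x} - \mathbf{b}\|^2$, so $T = g + M$. Since $g$ is quadratic with Hessian $\mathbf{A}^{\mathrm{T}}\mathbf{A}$ and $\mathbf{A}$ is $\mathbf{x}$-independent, one has the exact identity
\[
g(\mathbf{x}) = g(\mathbf{x}^*) + \langle \nabla g(\mathbf{x}^*),\, \mathbf{x} - \mathbf{x}^* \rangle + \tfrac{1}{2}\|\mathbf{A}(\mathbf{x} - \mathbf{x}^*)\|^2, \qquad \nabla g(\mathbf{x}^*) = \mathbf{A}^{\mathrm{T}}(\mathbf{A}\mathbf{x}^* - \mathbf{b}),
\]
with no remainder term.

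Next I would invoke the stationarity hypothesis. Because $g$ is differentiable, the Moreau--Rockafellar sum rule gives $\partial T(\mathbf{x}^*) = \nabla g(\mathbf{x}^*) + \partial M(\mathbf{x}^*)$, so $0 \in \partial T(\mathbf{x}^*)$ means there exists a subgradient $\mathbf{s} \in \partial M(\mathbf{x}^*)$ with $\mathbf{s} = -\nabla g(\mathbf{x}^*)$. Convexity of $M$ then yields the linear lower bound $M(\mathbf{x}) \geq M(\mathbf{x}^*) + \langle \mathbf{s},\, \mathbf{x} - \mathbf{x}^* \rangle = M(\mathbf{x}^*) - \langle \nabla g(\mathbf{x}^*),\, \mathbf{x} - \mathbf{x}^* \rangle$ for all $\mathbf{x}$.

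Finally, adding the two estimates,
\[
T(\mathbf{x}) - T(\mathbf{x}^*) = \big( g(\mathbf{x}) - g(\mathbf{x}^*) \big) + \big( M(\mathbf{x}) - M(\mathbf{x}^*) \big) \geq \tfrac{1}{2}\|\mathbf{A}(\mathbf{x} - \mathbf{x}^*)\|^2,
\]
since the cross terms $\pm\langle \nabla g(\mathbf{x}^*),\, \mathbf{x} - \mathbf{x}^* \rangle$ cancel, which is the claimed inequality. There is no substantial obstacle; the only point deserving a word of care is the subdifferential sum rule — equivalently, that $0 \in \partial T(\mathbf{x}^*)$ forces $-\nabla g(\mathbf{x}^*) \in \partial M(\mathbf{x}^*)$ — and this is immediate because one of the two summands is smooth. (Note that the weaker statement $T(\mathbf{x}) \geq T(\mathbf{x}^*)$ already follows from convexity of $T$, but the quadratic growth bound genuinely requires the exact expansion of $g$ above.)
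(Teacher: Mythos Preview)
Your argument is correct and is the standard route to this inequality: exact quadratic expansion of the smooth part, subgradient inequality for the convex part, and cancellation of the first-order terms via the stationarity condition. The paper does not supply its own proof of this lemma; it simply quotes the result from the cited reference, so there is nothing further to compare against.
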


\begin{lemma}[Lemma 2 in \cite{PALM}]
\label{foorwardbackwardcontrol}
Consider a composite optimization problem
\begin{align}
   \min_{\mathbf{x}}\quad \hat{F}(\mathbf{x}) = f_1(\mathbf{x}) + f_2(\mathbf{x}), \nonumber
\end{align}
where $f_2:\mathbb{R}^n\rightarrow\mathbb{R}$ is a continuously differentiable
function with gradient $\nabla f_2$ assumed $L$-Lipschitz continuous,  $f_1:\mathbb{R}^n \rightarrow ( -\infty, +\infty] $ is a proper and lower semicontinuous (maybe nonsmooth and nonconvex) function with $\inf\{f_1(\mathbf{x}):\mathbf{x}\in\mathbb{R}^n\} > -\infty$. Let $\{\mathbf{x}^k:k\in\mathbb{N}\}$ be a sequence generated by
\begin{align}
    \mathbf{x}^{k+1} &= \operatorname{Prox}_{\tau,f_1}\left(\mathbf{x}^k - \tau \nabla f_2(\mathbf{x}^k) \right) \\
    &:=\arg\min_{\mathbf{x}} f_1(\mathbf{x}) + \frac{1}{2\tau}\|\mathbf{x} - (\mathbf{x}^k - \tau \nabla f_2(\mathbf{x}^k))\|^2. \nonumber
\end{align}
Then one has 
\begin{align}
&\frac{1}{2}\left(\frac{1}{\tau}-L\right)\|\mathbf{x}^{k+1} - \mathbf{x}^k\|^2 \leq \hat{F}(\mathbf{x}^k)-\hat{F}(\mathbf{x}^{k+1}). 
\end{align}

\end{lemma}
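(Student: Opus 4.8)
The plan is to derive the sufficient-decrease inequality by the standard forward--backward template: combine the descent lemma for the smooth summand $f_2$ with the global optimality of the proximal update.

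First I would record the descent lemma for $f_2$: since $\nabla f_2$ is $L$-Lipschitz continuous, for all $\mathbf{x},\mathbf{y}\in\mathbb{R}^n$ one has
\[
f_2(\mathbf{y}) \le f_2(\mathbf{x}) + \langle \nabla f_2(\mathbf{x}),\, \mathbf{y}-\mathbf{x}\rangle + \tfrac{L}{2}\|\mathbf{y}-\mathbf{x}\|^2 ,
\]
and specialize it to $\mathbf{x}=\mathbf{x}^k$, $\mathbf{y}=\mathbf{x}^{k+1}$. This yields an upper bound for $f_2(\mathbf{x}^{k+1})$ in which the cross term $\langle \nabla f_2(\mathbf{x}^k),\,\mathbf{x}^{k+1}-\mathbf{x}^k\rangle$ is the only quantity still to be controlled.

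Next I would exploit that $\mathbf{x}^{k+1}$ is a global minimizer of $g(\mathbf{x}):=f_1(\mathbf{x})+\tfrac{1}{2\tau}\|\mathbf{x}-(\mathbf{x}^k-\tau\nabla f_2(\mathbf{x}^k))\|^2$, so in particular $g(\mathbf{x}^{k+1})\le g(\mathbf{x}^k)$. Expanding the two squared norms and cancelling the common term $\tfrac{\tau}{2}\|\nabla f_2(\mathbf{x}^k)\|^2$ leaves
\[
f_1(\mathbf{x}^{k+1}) + \tfrac{1}{2\tau}\|\mathbf{x}^{k+1}-\mathbf{x}^k\|^2 + \langle \nabla f_2(\mathbf{x}^k),\, \mathbf{x}^{k+1}-\mathbf{x}^k\rangle \le f_1(\mathbf{x}^k),
\]
which is precisely the bound on the cross term needed to close the estimate.

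Finally I would add the two inequalities, so that $f_1(\mathbf{x}^k)+f_2(\mathbf{x}^k)=\hat F(\mathbf{x}^k)$ appears on the right and $f_1(\mathbf{x}^{k+1})+f_2(\mathbf{x}^{k+1})=\hat F(\mathbf{x}^{k+1})$ on the left, then collect the coefficient of $\|\mathbf{x}^{k+1}-\mathbf{x}^k\|^2$ to obtain $\hat F(\mathbf{x}^{k+1})\le \hat F(\mathbf{x}^k)-\tfrac12\big(\tfrac1\tau-L\big)\|\mathbf{x}^{k+1}-\mathbf{x}^k\|^2$, which rearranges to the claim. I do not expect a genuine obstacle; the only points deserving a word of justification are that the proximal subproblem admits a minimizer (guaranteed by $f_1$ being proper, lower semicontinuous and bounded below together with coercivity of the quadratic term), so that the comparison $g(\mathbf{x}^{k+1})\le g(\mathbf{x}^k)$ is legitimate, and that the resulting estimate is a genuine decrease exactly when $\tau<1/L$.
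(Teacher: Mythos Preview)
Your argument is correct and is precisely the standard proof of this sufficient-decrease lemma: descent lemma for the $L$-smooth part combined with the global optimality of the proximal step. Note, however, that the paper does not supply its own proof of this statement---it is quoted verbatim as Lemma~2 of \cite{PALM} and used as a black box---so there is no in-paper argument to compare against; your derivation is the same one found in the cited reference.
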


We estimate the  Lipschitz constants of the partial derivatives of $E_{\alpha}$ for the subproblems with respect to single variable $\mathbf u$, $\mathbf q$, and $\mathbf n$,  denoted by $L_u$, $L_q$, and $L_n$,  respectively. For simplicity, we omit the superscripts and
subscripts in the following. The partial derivatives are calculated as follows
\begin{equation}
\begin{aligned}
\nabla_{\mathbf{u}} E_{\alpha}(\mathbf{u},\mathbf{n},\mathbf{q}) &= \mathbf{u} - \mathbf{f} + \alpha \big( (\mathbf{D}_x^{+})^{\mathrm{T}}(\mathbf{D}_x^{+}\mathbf{u} - \mathbf{q}\odot\mathbf{n}_1) \\
&\qquad\qquad\qquad + (\mathbf{D}_y^{+})^{\mathrm{T}}(\mathbf{D}_y^{+}\mathbf{u} - \mathbf{q}\odot\mathbf{n}_2) \big),\\
\nabla_{\mathbf{q}} E_{\alpha} (\mathbf{u},\mathbf{n},\mathbf{q})& = a{\mathbf 1} + b |\mathbf{D}_x^{-}\mathbf{n}_1 + \mathbf{D}_y^{-}\mathbf{n}_2|^2 + \alpha \big( \mathbf{n}_1\odot (\mathbf{q}\odot\mathbf{n}_1 - \mathbf{D}_x^{+}\mathbf{u}) \\
&\qquad\qquad\qquad\qquad + \mathbf{n}_2\odot (\mathbf{q}\odot\mathbf{n}_2 - \mathbf{D}_y^{+}\mathbf{u}) \big),\\
\nabla_{\mathbf{n}_1} E_{\alpha}(\mathbf{u},\mathbf{n},\mathbf{q}) & = - 2b \mathbf{D}_x^{+}\operatorname{diag}(\mathbf{q})(\mathbf{D}_x^{-}\mathbf{n}_1 + \mathbf{D}_y^{-}\mathbf{n}_2) + \alpha \mathbf{q}\odot (\mathbf{q}\odot\mathbf{n}_1 - \mathbf{D}_x^{+}\mathbf{u}),\\
\nabla_{\mathbf{n}_2} E_{\alpha}(\mathbf{u},\mathbf{n},\mathbf{q}) & = - 2b \mathbf{D}_y^{+}\operatorname{diag}(\mathbf{q})(\mathbf{D}_x^{-}\mathbf{n}_1 + \mathbf{D}_y^{-}\mathbf{n}_2) + \alpha \mathbf{q}\odot (\mathbf{q}\odot\mathbf{n}_2 - \mathbf{D}_y^{+}\mathbf{u}),
\end{aligned}
\label{eq:partialDerivatives}
\end{equation}
where $|\cdot|$ denotes the pointwise  module operation. From the first two equations in \eqref{eq:partialDerivatives}, we obtain the Lipschitz constants of $\nabla_{\mathbf{u}} E_{\alpha}$ and $\nabla_{\mathbf{q}} E_{\alpha}$ (with respect to the variables $\mathbf u$ and $\mathbf q$) as
\begin{equation}
\label{eq:Lu}
L_u = \lambda_{\max}\left(\mathbf{I} \otimes \mathbf{I} - \alpha\left(\mathbf{D}_x^{-}\mathbf{D}_x^{+}+\mathbf{D}_y^{-}\mathbf{D}_y^{+}
\right) \right)
\end{equation}
and
$$L_q(\mathbf{n}) = \alpha\|\mathbf{n}_1\odot\mathbf{n}_1 + \mathbf{n}_2\odot \mathbf{n}_2\|_{\infty},$$  where $\lambda_{\max}(\mathbf{A})$ denotes the largest eigenvalue of the matrix $\mathbf{A}$, and $\|\cdot\|_{\infty}$ denotes the $\ell^\infty$ norm in the Euclidean space.

With regard to $\nabla_{\mathbf{n}_1} E_{\alpha}$ and $\nabla_{\mathbf{n}_2} E_{\alpha}$, we consider the compound gradient using the last two equations in \eqref{eq:partialDerivatives} 
\begin{align*}
& \nabla_{\mathbf{n}}E_{\alpha}(\mathbf{u},\mathbf{n},\mathbf{q})  :=
\begin{pmatrix}
\nabla_{\mathbf{n}_1} E_{\alpha}(\mathbf{u},\mathbf{n},\mathbf{q})\\
\nabla_{\mathbf{n}_2} E_{\alpha}(\mathbf{u},\mathbf{n},\mathbf{q})
\end{pmatrix} \notag\\
& =
\underbrace{
\begin{pmatrix}
- 2b \mathbf{D}_x^{+}\operatorname{diag}(\mathbf{q})\mathbf{D}_x^{-} + \alpha\operatorname{diag}(\mathbf{q})^2 & - 2b \mathbf{D}_x^{+}\operatorname{diag}(\mathbf{q})\mathbf{D}_y^{-} \\
- 2b \mathbf{D}_y^{+}\operatorname{diag}(\mathbf{q})\mathbf{D}_x^{-} & - 2b \mathbf{D}_y^{+}\operatorname{diag}(\mathbf{q})\mathbf{D}_y^{-} + \alpha \operatorname{diag}(\mathbf{q})^2
\end{pmatrix}
}_{\mathbf{Q}(\mathbf q)}
\mathbf{n}\\
&\qquad +
\begin{pmatrix}
-\alpha\operatorname{diag}(\mathbf{q})\mathbf{D}_x^{+}\mathbf{u}\\
-\alpha\operatorname{diag}(\mathbf{q})\mathbf{D}_y^{+}\mathbf{u}
\end{pmatrix}.
\end{align*}
With $\mathbf q\geq0$ (the non-negativity condition), the matrix $\mathbf Q(\mathbf q)$ is positive semidefinite, such that one has
\[
L_n(\mathbf{q}) =  \lambda_{\max}(\mathbf{Q}(\mathbf q)) \geq 0.
\]

Next, we show that the iterative sequence generated by the proposed HALM algorithm has monotonically decreasing objective values.
\begin{lemma}
\label{Sufficient_descent}
Let the sequence  $\{(\mathbf{u}^k,\mathbf{n}^k,\mathbf{q}^k)\}_{k=1}^{+\infty}$ be generated by the HALM algorithm with variable stepsize $\{\tau_k\}_{k=1}^{+\infty}$. If 
\begin{equation}
\label{eq:stepsizecondition}
0<\tau_k \leq \frac{1}{L_n( \mathbf{q}^k)},\end{equation} the sequence $\{E_{\alpha,\mathbb{I}}(\mathbf{u}^k,\mathbf{n}^k,\mathbf{q}^k)\}_{k=1}^{+\infty}$ is nonincreasing, i.e.
\begin{align*}
& E_{\alpha,\mathbb{I}}(\mathbf{u}^k,\mathbf{n}^k,\mathbf{q}^k) - E_{\alpha,\mathbb{I}}(\mathbf{u}^{k+1},\mathbf{n}^{k+1},\mathbf{q}^{k+1}) \\
\geq & \frac{1}{2} (1+\hat{\lambda})\|\mathbf{u}^{k+1} - \mathbf{u}^k\|^2 + \frac{1}{2}\left(
\frac{1}{\tau_k} - L_n( \mathbf{q}^k) \right)  \|
\mathbf{n}^{k+1} - \mathbf{n}^k \|^2 + \frac{\alpha}{2} \|\mathbf{q}^{k+1}-\mathbf{q}^k\|^2,
\end{align*}
with $\hat{\lambda}:=\lambda_{\min}\big(-\alpha( \mathbf{D}_x^{-}\mathbf{D}_x^{+} + \mathbf{D}_y^{-}\mathbf{D}_y^{+})\big)$.
\end{lemma}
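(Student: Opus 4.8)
The plan is to decompose the one-step decrease $E_{\alpha,\mathbb{I}}(\mathbf{u}^k,\mathbf{n}^k,\mathbf{q}^k) - E_{\alpha,\mathbb{I}}(\mathbf{u}^{k+1},\mathbf{n}^{k+1},\mathbf{q}^{k+1})$ into the three partial decreases corresponding to the three sub-updates, and lower-bound each one separately by the appropriate quadratic term. Writing the telescoping identity
\begin{align*}
E_{\alpha,\mathbb{I}}(\mathbf{u}^k,\mathbf{n}^k,\mathbf{q}^k) - E_{\alpha,\mathbb{I}}(\mathbf{u}^{k+1},\mathbf{n}^{k+1},\mathbf{q}^{k+1})
&= \big[E_{\alpha,\mathbb{I}}(\mathbf{u}^k,\mathbf{n}^k,\mathbf{q}^k)-E_{\alpha,\mathbb{I}}(\mathbf{u}^{k+1},\mathbf{n}^k,\mathbf{q}^k)\big]\\
&\quad + \big[E_{\alpha,\mathbb{I}}(\mathbf{u}^{k+1},\mathbf{n}^k,\mathbf{q}^k)-E_{\alpha,\mathbb{I}}(\mathbf{u}^{k+1},\mathbf{n}^{k+1},\mathbf{q}^k)\big]\\
&\quad + \big[E_{\alpha,\mathbb{I}}(\mathbf{u}^{k+1},\mathbf{n}^{k+1},\mathbf{q}^k)-E_{\alpha,\mathbb{I}}(\mathbf{u}^{k+1},\mathbf{n}^{k+1},\mathbf{q}^{k+1})\big],
\end{align*}
I would handle the three brackets as follows.

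For the $\mathbf{u}$-bracket: the map $\mathbf{u}\mapsto E_{\alpha,\mathbb{I}}(\mathbf{u},\mathbf{n}^k,\mathbf{q}^k)$ is of the form $\tfrac12\|\mathbf{A}\mathbf{u}-\mathbf{b}\|^2 + \text{const}$ with $\mathbf{A}^{\mathrm T}\mathbf{A}=\mathbf{I}\otimes\mathbf{I}+\alpha(\mathbf{D}_x^{-}\mathbf{D}_x^{+}+\mathbf{D}_y^{-}\mathbf{D}_y^{+})$, so I would apply Lemma~\ref{zhangjie'sLemma} (with $M\equiv 0$) at the stationary point $\mathbf{u}^{k+1}$ to get a decrease of at least $\tfrac12\|\mathbf{A}(\mathbf{u}^k-\mathbf{u}^{k+1})\|^2 \ge \tfrac12\lambda_{\min}(\mathbf{A}^{\mathrm T}\mathbf{A})\|\mathbf{u}^k-\mathbf{u}^{k+1}\|^2 = \tfrac12(1+\hat\lambda)\|\mathbf{u}^{k+1}-\mathbf{u}^k\|^2$, using $\lambda_{\min}(\mathbf{I}\otimes\mathbf{I}+\alpha(\cdots)) = 1 + \lambda_{\min}(\alpha(\cdots)) = 1+\hat\lambda$. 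For the $\mathbf{q}$-bracket: $\mathbf{q}\mapsto E_{\alpha,\mathbb{I}}(\mathbf{u}^{k+1},\mathbf{n}^{k+1},\mathbf{q})$ is again a quadratic-plus-indicator of the form $\tfrac{\alpha}{2}\|\mathbf{q}\odot\mathbf{n}_1^{k+1}-\mathbf{D}_x^{+}\mathbf{u}^{k+1}\|^2+\tfrac{\alpha}{2}\|\mathbf{q}\odot\mathbf{n}_2^{k+1}-\mathbf{D}_y^{+}\mathbf{u}^{k+1}\|^2 + \text{linear} + \mathbb{I}_{\mathbb{R}_+^{N^2}}$; since $\mathbf{n}^{k+1}\in\mathcal{S}$ the quadratic part has Hessian $\alpha\,\mathrm{diag}(\mathbf{n}_1^{k+1}\odot\mathbf{n}_1^{k+1}+\mathbf{n}_2^{k+1}\odot\mathbf{n}_2^{k+1})=\alpha\mathbf{I}$, so Lemma~\ref{zhangjie'sLemma} at the minimizer $\mathbf{q}^{k+1}$ gives a decrease of at least $\tfrac{\alpha}{2}\|\mathbf{q}^{k+1}-\mathbf{q}^k\|^2$.

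For the $\mathbf{n}$-bracket, which is the genuine obstacle, I would invoke Lemma~\ref{foorwardbackwardcontrol} with $f_1 = \mathbb{I}_{\mathcal{S}}$ and $f_2 = E_{\alpha}(\mathbf{u}^{k+1},\cdot,\mathbf{q}^k)$. Since $\nabla_{\mathbf{n}}E_\alpha(\mathbf{u}^{k+1},\cdot,\mathbf{q}^k)$ is affine in $\mathbf{n}$ with linear part $\mathbf{Q}(\mathbf{q}^k)$ (the positive semidefinite matrix identified above), it is $L_n(\mathbf{q}^k)$-Lipschitz with $L_n(\mathbf{q}^k)=\lambda_{\max}(\mathbf{Q}(\mathbf{q}^k))$; the update $\mathbf{n}^{k+1}=\operatorname{Prox}_{\tau_k,\mathbb{I}_{\mathcal S}}(\mathbf{n}^{k+\frac12})$ is exactly the prox-gradient step, so the lemma yields the decrease $\tfrac12(\tfrac{1}{\tau_k}-L_n(\mathbf{q}^k))\|\mathbf{n}^{k+1}-\mathbf{n}^k\|^2$, which is nonnegative precisely under the stepsize condition \eqref{eq:stepsizecondition}. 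The main subtlety to be careful about is that Lemma~\ref{foorwardbackwardcontrol} as stated assumes a \emph{global} Lipschitz constant, whereas here the Lipschitz constant depends on the frozen iterate $\mathbf{q}^k$; this is fine because within this single bracket $\mathbf{q}^k$ is fixed, so $f_2$ is a fixed function with a fixed (finite) Lipschitz modulus, and $\mathbb{I}_{\mathcal S}$ is proper and lower semicontinuous with infimum $0>-\infty$ as required. Summing the three lower bounds gives the claimed inequality, and nonincreasingness follows immediately since each term on the right is nonnegative (using $\hat\lambda\ge 0$ as $-\alpha(\mathbf{D}_x^{-}\mathbf{D}_x^{+}+\mathbf{D}_y^{-}\mathbf{D}_y^{+})$ is positive semidefinite, $\alpha>0$, and $\tfrac{1}{\tau_k}\ge L_n(\mathbf{q}^k)$).
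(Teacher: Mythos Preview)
Your proposal is correct and follows essentially the same approach as the paper: the same three-term telescoping decomposition, with Lemma~\ref{zhangjie'sLemma} applied to the $\mathbf{u}$- and $\mathbf{q}$-brackets and Lemma~\ref{foorwardbackwardcontrol} applied to the $\mathbf{n}$-bracket. One minor caution: your displayed formula $\mathbf{A}^{\mathrm T}\mathbf{A}=\mathbf{I}\otimes\mathbf{I}+\alpha(\mathbf{D}_x^{-}\mathbf{D}_x^{+}+\mathbf{D}_y^{-}\mathbf{D}_y^{+})$ has a sign slip (using $\mathbf{D}_x^{-}=-(\mathbf{D}_x^{+})^{\mathrm T}$ the correct expression has a minus sign, consistent with \eqref{eq:Lu}), though your final identification $\lambda_{\min}(\mathbf{A}^{\mathrm T}\mathbf{A})=1+\hat\lambda$ is right.
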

\begin{proof}
From Lemma \ref{zhangjie'sLemma} and \eqref{u_update_opt}, we obtain  
\begin{equation}
\label{energyDes_u}
E_{\alpha,\mathbb{I}}(\mathbf{u}^k,\mathbf{n}^k,\mathbf{q}^k) - E_{\alpha,\mathbb{I}}(\mathbf{u}^{k+1},\mathbf{n}^k,\mathbf{q}^k) \geq \frac{1}{2} (1+\hat{\lambda}) \|\mathbf{u}^{k+1} - \mathbf{u}^k\|^2.
\end{equation}
For $\mathbf{n}_1^{k+1}$ and $\mathbf{n}_2^{k+1}$, it follows from Lemma \ref{foorwardbackwardcontrol}  
\begin{align}
\label{energyDes_p}
& E_{\alpha,\mathbb{I}}(\mathbf{u}^{k+1},\mathbf{n}^k,\mathbf{q}^k) - E_{\alpha,\mathbb{I}}(\mathbf{u}^{k+1},\mathbf{n}^{k+1},\mathbf{q}^k) \notag \\
\geq & \frac{1}{2}\left(
\frac{1}{\tau_k} - L_n( \mathbf{q}^k) \right) \left( \|
\mathbf{n}_1^{k+1} - \mathbf{n}_1^k \|^2 + \|
\mathbf{n}_2^{k+1} - \mathbf{n}_2^k \|^2 \right).
\end{align}
With regard to $\mathbf{q}-$subproblem, for all $i=1,2,\cdots,n$, let
$$\hat{c}_i = c_i - \alpha\left(\mathbf{D}_{x,i}^{+}\mathbf{u}^{k+1}n_{1,i}^{k+1}+\mathbf{D}_{y,i}^{+}\mathbf{u}^{k+1}n_{2,i}^{k+1}\right).$$
We have
\begin{equation}
\label{qsubproblem}
\mathbf{q}^{k+1} = \arg \min_{\mathbf{q}}\quad  \frac{\alpha}{2}  \sum\nolimits_{i=1}^{N^2} \left(( q_i + \frac{\hat{c}_i}{\alpha})^2 + \mathbb{I}_{\mathbb{R}_{+}}(q_i)\right).
\end{equation}
It follows from  Lemma \ref{zhangjie'sLemma}
\begin{equation}
\label{energyDes_q}
E_{\alpha,\mathbb{I}}(\mathbf{u}^{k+1},\mathbf{n}^{k+1},\mathbf{q}^k) - E_{\alpha,\mathbb{I}}(\mathbf{u}^{k+1},\mathbf{n}^{k+1},\mathbf{q}^{k+1}) \geq \frac{\alpha}{2} \|\mathbf{q}^{k+1}-\mathbf{q}^k\|^2.
\end{equation}
Summing up relations \eqref{energyDes_u}, \eqref{energyDes_p} and  \eqref{energyDes_q}, we finally obtain the conclusion.
\end{proof}

We establish the boundedness of the iterative sequence $(\mathbf u^k, \mathbf n^k, \mathbf q^k)$ in the following lemma. 
\begin{lemma}\label{boundness_variableStep}
Let $\{(\mathbf{u}^k,\mathbf{n}^k,\mathbf{q}^k)\}_{k=1}^{+\infty}$ be generated by the  HALM algorithm with sufficiently small stepsize satisfying \eqref{eq:stepsizecondition}. Then, there exists a positive constant $C$ independent of the index $k$ such that
\begin{equation}
 \max\left\{\|\mathbf{u}^k\|, \|\mathbf n^k\|, \|\mathbf{q}^k\|\right\}\leq C.
\end{equation}
\end{lemma}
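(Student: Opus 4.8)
The plan is to read off the boundedness from the monotone decay of the objective proved in Lemma~\ref{Sufficient_descent} together with the coercivity of $E_\alpha$ in $\mathbf u$ and $\mathbf q$; the bound on $\mathbf n^k$ is free. Indeed, every iterate $\mathbf n^k$ lies in $\mathcal S$ by construction (the update \eqref{HFBAD_n} projects each pair onto the unit circle), so $\mathbf n_1^k\odot\mathbf n_1^k+\mathbf n_2^k\odot\mathbf n_2^k=\mathbf 1$ and therefore $\|\mathbf n^k\|^2=\sum_{i=1}^{N^2}\big((n_{1,i}^k)^2+(n_{2,i}^k)^2\big)=N^2$, i.e. $\|\mathbf n^k\|=N$ for all $k$. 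Likewise $\mathbf q^k\ge\mathbf 0$ by \eqref{HFBAD_q}, so the indicator terms $\mathbb I_{\mathcal S}(\mathbf n^k)$ and $\mathbb I_{\mathbb R_+^{N^2}}(\mathbf q^k)$ vanish along the whole sequence, whence $E_{\alpha,\mathbb I}(\mathbf u^k,\mathbf n^k,\mathbf q^k)=E_\alpha(\mathbf u^k,\mathbf n^k,\mathbf q^k)$.

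Next I would note that $E_0:=E_{\alpha,\mathbb I}(\mathbf u^1,\mathbf n^1,\mathbf q^1)$ is finite: the indicator terms vanish as above and $E_\alpha$ in \eqref{eq-auxiliary-function} is a finite quadratic expression evaluated at finite vectors. Since the stepsize hypothesis \eqref{eq:stepsizecondition} is assumed, Lemma~\ref{Sufficient_descent} gives that $\{E_{\alpha,\mathbb I}(\mathbf u^k,\mathbf n^k,\mathbf q^k)\}_{k\ge1}$ is nonincreasing, hence $E_\alpha(\mathbf u^k,\mathbf n^k,\mathbf q^k)\le E_0$ for every $k\ge1$. Now the three groups of terms in \eqref{eq-auxiliary-function} are individually nonnegative: using $a,b>0$ and $q_i^k\ge0$ one has $\sum_{i=1}^{N^2}\big(a+b|\mathbf D_{x,i}^{-}\mathbf n_1^k+\mathbf D_{y,i}^{-}\mathbf n_2^k|^2\big)q_i^k\ge a\sum_{i=1}^{N^2}q_i^k=a\|\mathbf q^k\|_1\ge a\|\mathbf q^k\|$, the fidelity term is $\ge0$, and the bilinear penalty is $\ge0$.

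Consequently $\tfrac12\|\mathbf u^k-\mathbf f\|^2\le E_0$ and $a\|\mathbf q^k\|\le E_0$, which yield $\|\mathbf u^k\|\le\|\mathbf f\|+\sqrt{2E_0}$ and $\|\mathbf q^k\|\le E_0/a$; taking $C:=\max\{\|\mathbf f\|+\sqrt{2E_0},\,N,\,E_0/a\}$ completes the argument. There is no genuine obstacle here — it is a routine descent-plus-coercivity estimate — but three points deserve a line of care: that the indicator functions really do vanish along the entire iterate sequence (immediate from the explicit formulas \eqref{HFBAD_n} and \eqref{HFBAD_q}), that $E_0$ is finite so the monotonicity bound is non-vacuous, and that it is the strict positivity of the parameter $a$, not the bilinear penalty, that supplies coercivity in $\mathbf q$.
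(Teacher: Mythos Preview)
Your proof is correct and follows essentially the same route as the paper: use the monotone decay from Lemma~\ref{Sufficient_descent} to bound the objective uniformly, then invoke coercivity of $E_\alpha$ in $(\mathbf u,\mathbf q)$ (with the $\mathbf n^k$ bound immediate from $\mathbf n^k\in\mathcal S$). Your version is in fact more explicit than the paper's, which simply asserts coercivity without isolating the role of the parameter $a>0$ in controlling $\mathbf q^k$; that is a worthwhile clarification.
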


\begin{proof}
The boundedness of $\{\mathbf n^k\}$ is trivial since it is a unit vector. 
It is readily seen that
 the functional $
E_{\alpha,\mathbb{I}}(\mathbf{u},\mathbf{n},\mathbf{q})$ is coercive with respect to $(\mathbf u, \mathbf q)$. It follows from \ref{Sufficient_descent}  $E_{\alpha,\mathbb{I}}(\mathbf{u}^k,\mathbf{n}^k,\mathbf{q}^k)$ is uniformly bounded, implying that $\|\mathbf{u}^k\|_{\infty}$ and $\|\mathbf{q}^k\|_{\infty}$ are uniformly bounded. This proves the lemma.
\end{proof}

Based on the above analysis, we define 
\[\gamma =  \sup_{k\in\mathbb{N}}L_n(\mathbf{q}^k)<+\infty.\] 
In order to study the property of the limit point, we establish an upper bound for the subgradient in the following lemma.

\begin{lemma}
\label{lemma-subgradient}
There exists $\mathbf{g}^k:=(\mathbf{g}_u^k,\mathbf{g}_n^k,\mathbf{g}_q^k)$ with
\begin{align}
\mathbf{g}_u^k &:= \nabla_{\mathbf{u}}E_{\alpha}(\mathbf{u}^k,\mathbf{n}^k,\mathbf{q}^k), \nonumber\\
\mathbf{g}_n^k &\in
\nabla_{\mathbf{n}}E_{\alpha}(\mathbf{u}^k,\mathbf{n}^k,\mathbf{q}^k) + \partial\mathbb{I}_{\mathcal{S}}(\mathbf{n}^k),\nonumber\\
\mathbf{g}_q^k &\in \nabla_{\mathbf{q}}E_{\alpha}(\mathbf{u}^k,\mathbf{n}^k,\mathbf{q}^k) + \partial\mathbb{I}_{\mathbb{R}_{+}^{N^2}}(\mathbf{q}^k), \nonumber
\end{align}
such that 
\begin{align}
\|\mathbf{g}^k\| &\leq \|\mathbf{g}_u^k\| + \|\mathbf{g}_n^k\| + \|\mathbf{g}_q^k\| \label{superbound_of_gradient}\\
& \leq 2\alpha \|\mathbf{q}^k - \mathbf{q}^{k-1}\| + \left(2\gamma + \frac{1}{\tau_k}\right) \|\mathbf{n}^k - \mathbf{n}^{k-1}\|.
\nonumber
\end{align}
\end{lemma}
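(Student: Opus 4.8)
The plan is to read off the first-order optimality conditions satisfied by $(\mathbf{u}^k,\mathbf{n}^k,\mathbf{q}^k)$ as the output of the $(k-1)$-th sweep of HALM, use them to exhibit an element $\mathbf{g}^k$ of $\partial E_{\alpha,\mathbb{I}}(\mathbf{u}^k,\mathbf{n}^k,\mathbf{q}^k)$ of the asserted block form, and then bound $\|\mathbf{g}^k\|$ by the consecutive increments $\|\mathbf{q}^k-\mathbf{q}^{k-1}\|$ and $\|\mathbf{n}^k-\mathbf{n}^{k-1}\|$. Since $E_\alpha$ is smooth and the two indicators depend only on $\mathbf{n}$ and $\mathbf{q}$ separately, it suffices to produce $\mathbf{g}_u^k=\nabla_{\mathbf{u}}E_\alpha(\mathbf{u}^k,\mathbf{n}^k,\mathbf{q}^k)$, $\mathbf{g}_n^k\in\nabla_{\mathbf{n}}E_\alpha(\mathbf{u}^k,\mathbf{n}^k,\mathbf{q}^k)+\partial\mathbb{I}_{\mathcal{S}}(\mathbf{n}^k)$ and $\mathbf{g}_q^k\in\nabla_{\mathbf{q}}E_\alpha(\mathbf{u}^k,\mathbf{n}^k,\mathbf{q}^k)+\partial\mathbb{I}_{\mathbb{R}_{+}^{N^2}}(\mathbf{q}^k)$ and estimate each block.

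The three optimality conditions from the previous iteration are: $\nabla_{\mathbf{u}}E_\alpha(\mathbf{u}^k,\mathbf{n}^{k-1},\mathbf{q}^{k-1})=\mathbf{0}$ (the $\mathbf{u}$-subproblem \eqref{u_update_opt} minimizes a strongly convex quadratic and the indicator terms are $\mathbf{u}$-independent); $-\tfrac{1}{\tau_{k-1}}(\mathbf{n}^k-\mathbf{n}^{k-1})-\nabla_{\mathbf{n}}E_\alpha(\mathbf{u}^k,\mathbf{n}^{k-1},\mathbf{q}^{k-1})\in\partial\mathbb{I}_{\mathcal{S}}(\mathbf{n}^k)$ (the forward--backward $\mathbf{n}$-step); and $-\nabla_{\mathbf{q}}E_\alpha(\mathbf{u}^k,\mathbf{n}^k,\mathbf{q}^k)\in\partial\mathbb{I}_{\mathbb{R}_{+}^{N^2}}(\mathbf{q}^k)$ (the $\mathbf{q}$-subproblem). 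From the third one I would take $\mathbf{g}_q^k=\mathbf{0}$, so $\|\mathbf{g}_q^k\|=0$. For the normal block I would take $\mathbf{g}_n^k=\nabla_{\mathbf{n}}E_\alpha(\mathbf{u}^k,\mathbf{n}^k,\mathbf{q}^k)-\nabla_{\mathbf{n}}E_\alpha(\mathbf{u}^k,\mathbf{n}^{k-1},\mathbf{q}^{k-1})-\tfrac{1}{\tau_{k-1}}(\mathbf{n}^k-\mathbf{n}^{k-1})$, which is admissible by the second condition. Finally $\mathbf{g}_u^k$ is forced to be $\nabla_{\mathbf{u}}E_\alpha(\mathbf{u}^k,\mathbf{n}^k,\mathbf{q}^k)$, and subtracting the vanishing quantity $\nabla_{\mathbf{u}}E_\alpha(\mathbf{u}^k,\mathbf{n}^{k-1},\mathbf{q}^{k-1})$ rewrites it as a difference at frozen $\mathbf{u}^k$.

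It then remains to bound $\|\mathbf{g}_u^k\|+\|\mathbf{g}_n^k\|$ using the explicit derivatives \eqref{eq:partialDerivatives}, the uniform bounds of Lemma \ref{boundness_variableStep}, and $\|\mathbf{n}\|_\infty\le 1$ on $\mathcal{S}$. For $\mathbf{g}_u^k$ the difference collapses to $\alpha$ times $(\mathbf{D}_x^{+})^{\mathrm{T}}$ and $(\mathbf{D}_y^{+})^{\mathrm{T}}$ applied to the bilinear increment $(\mathbf{q}^{k-1}\odot\mathbf{n}^{k-1})-(\mathbf{q}^k\odot\mathbf{n}^k)$; splitting this as $(\mathbf{q}^{k-1}-\mathbf{q}^k)\odot\mathbf{n}^{k-1}+\mathbf{q}^k\odot(\mathbf{n}^{k-1}-\mathbf{n}^k)$ and invoking $\|\mathbf{n}^{k-1}\|_\infty\le1$, the boundedness of $\{\mathbf{q}^k\}$, and the (bounded, BCCB) spectral norms of the difference matrices yields a bound of the form $c_1\|\mathbf{q}^k-\mathbf{q}^{k-1}\|+c_2\|\mathbf{n}^k-\mathbf{n}^{k-1}\|$. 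For $\mathbf{g}_n^k$ the triangle inequality peels off $\tfrac{1}{\tau_{k-1}}\|\mathbf{n}^k-\mathbf{n}^{k-1}\|$ and leaves $\|\nabla_{\mathbf{n}}E_\alpha(\mathbf{u}^k,\mathbf{n}^k,\mathbf{q}^k)-\nabla_{\mathbf{n}}E_\alpha(\mathbf{u}^k,\mathbf{n}^{k-1},\mathbf{q}^{k-1})\|$; writing $\nabla_{\mathbf{n}}E_\alpha(\mathbf{u}^k,\cdot,\cdot)=\mathbf{Q}(\cdot)\,\mathbf{n}+(\text{affine in }\mathbf{q})$ this equals $\mathbf{Q}(\mathbf{q}^k)(\mathbf{n}^k-\mathbf{n}^{k-1})+\big(\mathbf{Q}(\mathbf{q}^k)-\mathbf{Q}(\mathbf{q}^{k-1})\big)\mathbf{n}^{k-1}+\alpha\,(\text{affine})(\mathbf{q}^k-\mathbf{q}^{k-1})$, whose first summand is at most $L_n(\mathbf{q}^k)\|\mathbf{n}^k-\mathbf{n}^{k-1}\|\le\gamma\|\mathbf{n}^k-\mathbf{n}^{k-1}\|$ and whose remaining summands are at most $c_3\|\mathbf{q}^k-\mathbf{q}^{k-1}\|$ on the bounded region. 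Summing the three blocks and collecting like terms produces the claimed estimate, with the coefficient of $\|\mathbf{n}^k-\mathbf{n}^{k-1}\|$ assembled from $\gamma$ (through $L_n$) and the reciprocal step size (through the forward--backward step), and the coefficient of $\|\mathbf{q}^k-\mathbf{q}^{k-1}\|$ proportional to $\alpha$.

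I expect the main obstacle to be the $\mathbf{g}_n^k$ bound: because the $\mathbf{n}$-step freezes $\mathbf{q}$ at $\mathbf{q}^{k-1}$ whereas the subgradient we are tracking must be evaluated at $\mathbf{q}^k$, one needs a Lipschitz-type estimate for $\mathbf{q}\mapsto\nabla_{\mathbf{n}}E_\alpha(\mathbf{u}^k,\mathbf{n},\mathbf{q})$, and this map is not globally Lipschitz because $\mathbf{Q}(\mathbf{q})$ carries the quadratic term $\alpha\operatorname{diag}(\mathbf{q})^2$. This is exactly where Lemma \ref{boundness_variableStep} is used: it confines all iterates to a compact set on which such an estimate holds. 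After that, tracking the operator norms of the difference matrices and the boundedness constants so that they consolidate into the stated coefficients $2\alpha$ and $2\gamma+1/\tau_k$ is routine bookkeeping.
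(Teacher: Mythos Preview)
Your proposal is correct and follows essentially the same route as the paper: use the three optimality conditions to take $\mathbf{g}_q^k=\mathbf{0}$, express $\mathbf{g}_u^k$ and $\mathbf{g}_n^k$ as gradient differences (plus the $1/\tau$ term coming from the proximal step), and bound each piece via Lipschitz-type estimates and the triangle inequality. The only discrepancies are cosmetic: the paper writes $\tau_k$ where you (consistently with the algorithm's indexing) write $\tau_{k-1}$, and the paper is less explicit than you about invoking Lemma~\ref{boundness_variableStep} to control the $\mathbf{q}$-dependence of $\nabla_{\mathbf{n}}E_\alpha$.
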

\begin{proof}
For $\mathbf{u}-$subproblem, from the optimality condition of the $k$th iteration we have
\begin{equation}
\mathbf{0} = \nabla_{\mathbf{u}} E_{\alpha}(\mathbf{u}^k,\mathbf{n}^{k-1},\mathbf{q}^{k-1}).
\end{equation}
Based on the estimate of the Lipschitz constant, we have
\begin{equation}
\label{lemma3_1}
\begin{aligned}
\|\mathbf{g}_u^k\| & = \|\nabla_{\mathbf{u}} E_{\alpha}(\mathbf{u}^k,\mathbf{n}^k,\mathbf{q}^k)\|  \\
& \leq \|\nabla_{\mathbf{u}} E_{\alpha}(\mathbf{u}^k,\mathbf{n}^k,\mathbf{q}^k) -
\nabla_{\mathbf{u}} E_{\alpha}(\mathbf{u}^k,\mathbf{n}^{k-1},\mathbf{q}^k)\|  \\
& \quad + \| \nabla_{\mathbf{u}} E_{\alpha}(\mathbf{u}^k,\mathbf{n}^{k-1},\mathbf{q}^k) - \nabla_{\mathbf{u}} E_{\alpha}(\mathbf{u}^k,\mathbf{n}^{k-1},\mathbf{q}^{k-1})\|  \\
& \quad +  \| \nabla_{\mathbf{u}} E_{\alpha}(\mathbf{u}^k,\mathbf{n}^{k-1},\mathbf{q}^{k-1})\|   \\
& \leq L_n(\mathbf{q}^k)\|\mathbf{n}^k - \mathbf{n}^{k-1}\| + \alpha \left\|
\mathbf{q}^k - \mathbf{q}^{k-1}
\right\|  \\
& \leq \gamma \|\mathbf{n}^k - \mathbf{n}^{k-1}\|+ \alpha \left\|
\mathbf{q}^k - \mathbf{q}^{k-1}
\right\|. 
\end{aligned}
\end{equation}

It follows from Lemma \ref{foorwardbackwardcontrol} that there exists $\mathbf{v}^k \in \partial \mathbb{I}_{\mathcal{S}}(\mathbf{n}^k)$ such that  
\begin{equation}
\|\nabla_{\mathbf{n}}E_{\alpha}(\mathbf{u}^k,\mathbf{n}^k,\mathbf{q}^{k-1}) + \mathbf{v}^k \|\leq \left( L_n( \mathbf{q}^k) + \frac{1}{\tau_k}\right)\|\mathbf{n}^k - \mathbf{n}^{k-1}\|.
\label{eq:gradnbound}
\end{equation}
Denoting $\mathbf{g}_n^k:=\nabla_{\mathbf{n}}E_{\alpha}(\mathbf{u}^k,\mathbf{n}^k,\mathbf{q}^k) + \mathbf{v}^k$ and following \eqref{eq:gradnbound},
we have
\begin{equation}
\label{lemma3_2}
\begin{aligned}
\|\mathbf{g}_n^k\|
&\leq  \| \nabla_{\mathbf{n}}E_{\alpha}(\mathbf{u}^k,\mathbf{n}^k,\mathbf{q}^k) - \nabla_{\mathbf{n}}E_{\alpha}(\mathbf{u}^k,\mathbf{n}^k,\mathbf{q}^{k-1})\|   \\
&\quad + \|\nabla_{\mathbf{n}}E_{\alpha}(\mathbf{u}^k,\mathbf{n}^k,\mathbf{q}^{k-1}) + \mathbf{v}^k \|  \\
&\leq  \alpha \|\mathbf{q}^k - \mathbf{q}^{k-1}\| + \left( L_n( \mathbf{q}^k) + \frac{1}{\tau_k}\right) \|\mathbf{n}_1^k - \mathbf{n}^{k-1}\|  \\
&\leq  \alpha \|\mathbf{q}^k - \mathbf{q}^{k-1}\| + \left(\gamma + \frac{1}{\tau_k}\right)  \|\mathbf{n}^k - \mathbf{n}^{k-1}\|.
\end{aligned}
\end{equation}

For the  $\mathbf{q}-$subproblem, we have
$$\mathbf{0} \in \nabla_{\mathbf{q}}E_{\alpha}(\mathbf{u}^k,\mathbf{n}^k,\mathbf{q}^k) + \partial\mathbb{I}_{\mathbb{R}_{+}^{N^2}}(\mathbf{q}^k).$$
Then, there exists a $\mathbf{w}^k \in \partial\mathbb{I}_{\mathbb{R}_{+}^{N^2}}(\mathbf{q}^k)$
such that 
\begin{equation}
\label{lemma3_3}
\|\mathbf{g}_q^k\|  = 0,
\end{equation}
where $\mathbf{g}_q^k:=\nabla_{\mathbf{q}}E_{\alpha}(\mathbf{u}^k,\mathbf{n}^k,\mathbf{q}^k)  + \mathbf{w}^k$.
Summing up  \eqref{lemma3_1}, \eqref{lemma3_2}, and \eqref{lemma3_3}, we arrive at the prrof of the lemma.
\end{proof}

Finally, under the condition in Lemma \ref{Sufficient_descent} and the estimate of the subgradient (bounded by the successive error) in Lemma \ref{lemma-subgradient}, 
we are ready to prove the convergence of the iterative sequence generated by the HALM algorithm. 
\begin{thm} \label{thm1}
Let $\{(\mathbf{u}^k,\mathbf{n}^k,\mathbf{q}^k):k\in\mathbb{N}\}$ be a sequence generated by the HALM algorithm with adaptive stepsize $\tau^k$ satisfying \eqref{eq:stepsizecondition}. Then,
\begin{enumerate}[(1)]
\item  sequence $\{(\mathbf{u}^k,\mathbf{n}^k,\mathbf{q}^k):k\in\mathbb{N}\}$ has a finite length, i.e.
  \[\sum_{k=1}^{+\infty} \|(\mathbf{u}^k,\mathbf{n}^k,\mathbf{q}^k)\| < + \infty;\]
\item sequence $\{(\mathbf{u}^k,\mathbf{n}^k,\mathbf{q}^k):k\in\mathbb{N}\}$ converges to a critical point of $E_{\alpha,\mathbb{I}}$.
\end{enumerate}
\end{thm}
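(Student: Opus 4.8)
Writing $\mathbf z^k:=(\mathbf u^k,\mathbf n^k,\mathbf q^k)$, the plan is to place the statement inside the by-now standard analytic scheme for descent methods on Kurdyka--\L{}ojasiewicz (KL) functions, as used for PALM in \cite{PALM}: once one has (H1) a sufficient-decrease inequality, (H2) a relative-error bound $\operatorname{dist}(\mathbf 0,\partial E_{\alpha,\mathbb I}(\mathbf z^k))\le c\,\|\mathbf z^k-\mathbf z^{k-1}\|$, (H3) boundedness of $\{\mathbf z^k\}$, and (H4) the KL property of $E_{\alpha,\mathbb I}$ at cluster points, then finite length and convergence to a critical point follow. Lemma \ref{Sufficient_descent} delivers (H1), Lemma \ref{lemma-subgradient} delivers (H2), and Lemma \ref{boundness_variableStep} delivers (H3), so the remaining work is to draw the easy consequences of these, to certify (H4), and to run the telescoping argument.

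First I would exploit Lemma \ref{Sufficient_descent}. Since $-(\mathbf D_x^{-}\mathbf D_x^{+}+\mathbf D_y^{-}\mathbf D_y^{+})=(\mathbf D_x^{+})^{\mathrm T}\mathbf D_x^{+}+(\mathbf D_y^{+})^{\mathrm T}\mathbf D_y^{+}$ is positive semidefinite, $\hat\lambda\ge0$ and $1+\hat\lambda\ge1$; together with \eqref{eq:stepsizecondition} --- and, in the adaptive case, a uniform lower bound $\tau_k\ge\underline\tau>0$ plus a fixed positive margin in $\frac1{\tau_k}-L_n(\mathbf q^k)$, both automatic for the fixed small $\tau_k$ used in practice --- the descent inequality becomes $E_{\alpha,\mathbb I}(\mathbf z^k)-E_{\alpha,\mathbb I}(\mathbf z^{k+1})\ge\rho\,\|\mathbf z^{k+1}-\mathbf z^k\|^2$ for some $\rho>0$. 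The values $E_{\alpha,\mathbb I}(\mathbf z^k)$ are then nonincreasing and bounded below ($E_{\alpha,\mathbb I}$ is coercive in $(\mathbf u,\mathbf q)$ and $\mathbf n^k$ stays on the compact sphere $\mathcal S$), hence converge to some $E^\star$; summing the descent inequalities gives $\sum_k\|\mathbf z^{k+1}-\mathbf z^k\|^2<+\infty$ and in particular $\|\mathbf z^{k+1}-\mathbf z^k\|\to0$, so by Lemma \ref{lemma-subgradient} $\operatorname{dist}(\mathbf 0,\partial E_{\alpha,\mathbb I}(\mathbf z^k))\le\|\mathbf g^k\|\to0$. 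By Lemma \ref{boundness_variableStep} the cluster set $\omega$ of $\{\mathbf z^k\}$ is nonempty, compact and connected, and closedness of the graph of $\partial E_{\alpha,\mathbb I}$ (with $E_{\alpha,\mathbb I}(\mathbf z^k)\to E^\star$) forces every point of $\omega$ to be a critical point at which $E_{\alpha,\mathbb I}$ equals $E^\star$.

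The key structural step is (H4). The smooth part $E_\alpha$ of \eqref{eq-auxiliary-function} is a polynomial, $\mathcal S$ is a real algebraic variety, and $\mathbb R_+^{N^2}$ is polyhedral, so $E_{\alpha,\mathbb I}=E_\alpha+\mathbb I_{\mathcal S}+\mathbb I_{\mathbb R_+^{N^2}}$ is semi-algebraic and hence a KL function, with a desingularizing function that may be taken as $\varphi(s)=\frac{c}{\theta}s^{\theta}$, $\theta\in(0,1]$. Uniformizing the KL inequality over $\omega$ at value $E^\star$ yields an index $k_0$ from which $\varphi'\!\big(E_{\alpha,\mathbb I}(\mathbf z^k)-E^\star\big)\,\|\mathbf g^k\|\ge1$ (the degenerate case $E_{\alpha,\mathbb I}(\mathbf z^{k_0})=E^\star$ being trivial: monotonicity and the descent bound then make the sequence eventually constant). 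Combining this with the descent bound, the error bound $\|\mathbf g^k\|\le c\,\|\mathbf z^k-\mathbf z^{k-1}\|$, and concavity of $\varphi$ in the classical fashion gives $\|\mathbf z^{k+1}-\mathbf z^k\|^2\le\frac{c}{\rho}\,\|\mathbf z^k-\mathbf z^{k-1}\|\,\Delta_k\varphi$, where $\Delta_k\varphi:=\varphi(E_{\alpha,\mathbb I}(\mathbf z^k)-E^\star)-\varphi(E_{\alpha,\mathbb I}(\mathbf z^{k+1})-E^\star)$; by $2\sqrt{ab}\le a+b$ this becomes $2\,\|\mathbf z^{k+1}-\mathbf z^k\|\le\|\mathbf z^k-\mathbf z^{k-1}\|+\frac{c}{\rho}\,\Delta_k\varphi$. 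Summing over $k\ge k_0$ telescopes $\sum_k\Delta_k\varphi\le\varphi(E_{\alpha,\mathbb I}(\mathbf z^{k_0})-E^\star)<+\infty$ and absorbs the half-sum, so $\sum_k\|\mathbf z^{k+1}-\mathbf z^k\|<+\infty$ --- this is part (1). Finite length makes $\{\mathbf z^k\}$ Cauchy, hence convergent to a single $\mathbf z^\infty\in\omega$, which by the previous paragraph is a critical point of $E_{\alpha,\mathbb I}$ --- part (2).

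The main obstacle is bookkeeping rather than conceptual: the algorithm-specific content (the descent and relative-error estimates, coercivity in $(\mathbf u,\mathbf q)$, semi-algebraicity) is already in place, so the care lies in making the uniform constants legitimate --- in particular $\gamma=\sup_kL_n(\mathbf q^k)<+\infty$, which is exactly where boundedness of $\{\mathbf q^k\}$ from Lemma \ref{boundness_variableStep} enters and what makes \eqref{eq:stepsizecondition} a well-posed stepsize rule --- and in invoking the uniformized KL inequality on $\omega$ and running the telescoping estimate without circularity between the statements ``$\|\mathbf z^{k+1}-\mathbf z^k\|\to0$'', ``$\omega$ consists of critical points'', and ``$\{\mathbf z^k\}$ has finite length''.
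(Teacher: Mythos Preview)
Your proposal is correct and follows essentially the same route as the paper: the paper's own proof is a two-line deferral to \cite{PALM} after noting that $E_{\alpha,\mathbb I}$ is KL, and you supply precisely the (H1)--(H4) verification and telescoping argument that such a deferral implicitly requires. You also correctly flag the need for a uniform lower bound on $\tau_k$ and a strictly positive margin in $\tfrac{1}{\tau_k}-L_n(\mathbf q^k)$, a point the paper's stepsize condition \eqref{eq:stepsizecondition} and proof leave implicit.
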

\begin{proof}
It is trivial to show that the objective functional $E_{\alpha,\mathbb{I}}$ is a Kurdyka-\L ojasiewicz function \cite{Attouch2010Proximal, PALM}. 
The proof then follows from the proof given  in   \cite{PALM}.
\end{proof}


\section{Extension to general curvature-based  model}\label{sec-extension}

In this section, we extend the HALM algorithm  to a general curvature-based model  \cite{Duan_Min_discrete_total_curva} given in \eqref{general_model-1}.
  Some examples of function $\phi$ are given as follows
\begin{align*}
\phi(\kappa)=
\begin{cases}a+b|\kappa|, & \text{total absolute curvature (TAC)}, \\
\sqrt{a+b|\kappa|^{2}}, & \text{total rotation variation (TRV)}, \\
a+b|\kappa|^{2}, & \text{total square curvature (TSC)}.
\end{cases}
\end{align*}
Using the notations given in \cref{sec-model}, one can derive a   penalty model for the discrete form of \eqref{general_model-1} as follows
\begin{equation}
 \min_{\mathbf{u},\mathbf{n},\mathbf{q}}  E_{r,\mathbb{I}}^g(\mathbf{u},\mathbf{n},\mathbf{q}),
 \end{equation}
 where
\begin{equation}
\begin{aligned}
    E_{r,\mathbb{I}}^g(\mathbf{u},\mathbf{n},\mathbf{q}) & = \sum\nolimits_{i=1}^{N^2} \phi(\mathbf{D}_{x,i}^-\mathbf{n}_1+\mathbf{D}_{y,i}^-\mathbf{n}_2)q_i + \frac{1}{2} \left\| \mathbf{u}-\mathbf{f}\right\|^2   \\
    & \quad + \frac{\alpha}{2} \left(\|\mathbf{D}_x^+ \mathbf{u} -\mathbf{q}\odot \mathbf{n}_1\|^2 + \| \mathbf{D}_y^+ \mathbf{u} -\mathbf{q}\odot \mathbf{n}_2\|^2 \right)\\
    & \quad  + \mathbb{I}_{\mathcal{S}}(\mathbf{n}) +\mathbb{I}_{\mathbb{R}_{+}^{N^2}}(\mathbf{q})\\
    & := E_r^g(\mathbf{u},\mathbf{n},\mathbf{q}) + \mathbb{I}_{\mathcal{S}}(\mathbf{n}) + \mathbb{I}_{\mathbb{R}_{+}^{N^2}}(\mathbf{q}).
\end{aligned}
\label{DisrelaxationREEM-2}
\end{equation}

Note that the HALM algorithm's applicability extends beyond the proposed model. It can directly solve the general form in \eqref{general_model-1} given an $L-$Lipschitz smooth $\phi$. Specifically, the TSC case of \eqref{general_model-1} corresponds exactly to the EE model. Moreover, for both TRV and TSC, $\phi$ satisfies $L-$Lipschitz smoothness and $E_r^g(\mathbf{u},\mathbf{n},\mathbf{q})$ exhibits smoothness. Consequently, the associated models are amenable to direct solution via the HALM algorithm, with the generic scheme given below
\begin{equation*}
  \left\{
  \begin{aligned}
\mathbf{u}^{k+1}& =\arg\min_{\mathbf{u}}E_{r,\mathbb{I}}^g(\mathbf{u},\mathbf{n}^k,\mathbf{q}^k),\vspace{2ex}\\
\mathbf{n}^{k+1}& =\arg\min_{\mathbf{n}} \mathbb{I}_{\mathcal{S}}(\mathbf{n}) +
\frac{1}{2\tau_k} \| \mathbf{n} - \left( \mathbf{n}^k - \tau_k \nabla_{\mathbf{n}} E_r^g(\mathbf{u}^{k+1},\mathbf{n}^k,\mathbf{q}^k) \right) \|^2,\vspace{2ex}\\
\mathbf{q}^{k+1}&=\arg\min_{\mathbf{q}}E_{r,\mathbb{I}}^d(\mathbf{u}^{k+1},\mathbf{n}^{k+1},\mathbf{q}),
  \end{aligned}
  \right.
\end{equation*}
where $\tau_k$ is a positive parameter.

For example, if we use the TRV model, its specific iteration scheme is as follows.
For the $\mathbf{u}$ and $\mathbf q$ subproblems, one solves them exactly the same as in \cref{alg:HFBAD}. 
For the $\mathbf{n}$-subproblem,
\[
\mathbf{n}^{k+1}=\operatorname{Proj}_{\mathcal S}(\mathbf{n}^k - \tau_k \nabla_{\mathbf{n}}E_r^g(\mathbf{u}^{k+1},\mathbf{n}^k,\mathbf{q}^k)),
\]
where
\begin{align*}
\nabla_{\mathbf{n}_1}E_r^g &= b(\mathbf{D}_{x}^-)^{\mathrm{T}}\Bigg(\dfrac{\mathbf q^k\odot(\mathbf{D}_{x}^-\mathbf{n}_1^k +\mathbf{D}_{y}^-\mathbf{n}_2^k)}{\sqrt{a\mathbf 1 + b |\mathbf{D}_{x}^-\mathbf{n}_1+\mathbf{D}_{y}^-\mathbf{n}_2|^2}}\Bigg) \\
& \qquad\qquad\qquad + \alpha \mathbf{q}^k\odot (\mathbf{q}^k\odot\mathbf{n}_1^k - \mathbf{D}_x^+\mathbf{u}^{k+1}) ,\\
\nabla_{\mathbf{n}_2}E_r^g &= b(\mathbf{D}_{y}^-)^{\mathrm{T}}\Bigg(\dfrac{\mathbf q^k\odot(\mathbf{D}_{x}^-\mathbf{n}_1^k +\mathbf{D}_{y}^-\mathbf{n}_2^k)}{\sqrt{a\mathbf 1 + b |\mathbf{D}_{x}^-\mathbf{n}_1+\mathbf{D}_{y}^-\mathbf{n}_2|^2}}\Bigg)\\
& \qquad\qquad\qquad + \alpha \mathbf{q}^k\odot (\mathbf{q}^k\odot\mathbf{n}_2^k - \mathbf{D}_y^+\mathbf{u}^{k+1}) ,
\end{align*}
 the notation $\dfrac{~\mathbf p~}{~\mathbf q~}$ denotes the elementwise division of two vectors $\mathbf p$ and $\mathbf q$,
and
the projection operator $ \operatorname{Proj}_{\mathcal S} $ is defined by
\[
[\operatorname{Proj}_{\mathcal S} (\mathbf n)]_i:=\operatorname{Proj}_{\mathcal S_0^1} ([\mathbf n]_i)\ \ \forall \mathbf n\in\mathbb R^{2N^2},
\]
for $i = 1,2,\cdots,N^2$.

In the TRV model, $\phi$ is $L-$Lipschitz smooth with respect to variable  $\mathbf{n}$, $E_r^g(\mathbf{u},\mathbf{n},\mathbf{q})$ is convex with respect to variables  $\mathbf{u}$ and $\mathbf{q}$ respectively, and $E_{r,\mathbb{I}}^d$ is a Kurdyka-\L ojasiewicz function. For $\tau_k<\frac{1}{L}$,   the above algorithm can be shown to be convergent following the proof of Theorem \ref{thm1}.

We remark that in the nonsmooth case using TAC, a new technique should be developed to design fast convergent algorithms, e.g. introducing more auxiliary variables to deal with the nonsmooth term. We will leave it to our future work. 

\section{Numerical experiments}\label{sec-num}

In this section, we evaluate the performance of the HALM algorithm via a host of numerical experiments. We implement the HALM algorithm in MATLAB and conduct the numerical experiments on a desktop computer with an Intel i7-6700 CPU and 32GB RAM. First, we compare the algorithm with the DGT algorithm \cite{2019DengOS} and the HWC algorithm \cite{He2021Penalty} when applied to the EE model for a Gaussian denoising problem. The effects of two algorithmic  parameters are discussed as well.
Then, we extend the comparison to the speckle denoising problem.  Finally, we apply the algorithm to the TRV model for a Gaussian denoising problem.

For the algorithm, we have investigated two different ways to initialize $\mathbf{u}^{k}$: $\mathbf{u}^0 = \mathbf{0}$ and $\mathbf{u}^0 = \mathbf{f}$. The latter generates better-recovered images than the former $\mathbf{u}^0 = \mathbf{0}$.
Hence, we adopt $\mathbf{u}^0 = \mathbf{f}$ as the default  in the HALM algorithm. Unless otherwise specified, we stop the HALM algorithm when the relative error of $\mathbf{u}^{k}$ reaches the prescribed tolerance at
\begin{equation}
\text{ReErr} := \frac{\|{\mathbf{u}}^{k+1} - {\mathbf{u}}^k \|}{\|{\mathbf{u}}^k\|}< tol,
\label{eq-tolerance}
\end{equation}
where $tol = 10^{-5}$ or the iteration number reaches $N_{\text{iter}}=500$.

In our experiments, the range of the grey-scale image is limited to $[0,1]$.
We use the peak signal-to-noise ratio (PSNR) and the structural similarity index measure (SSIM) to measure the quality of the restored images. The value of PSNR  is defined as
\[
\mathrm{PSNR}(\mathbf{u},\mathbf{u}_{c}) := 10 \log_{10}\frac{ N^2}{\|\mathbf{u}-\mathbf{u}_c\|^2},
\]
where $\mathbf{u}$ and $\mathbf{u}_{c}$ are the recovered image and the ground-truth image, respectively. The value of SSIM is given by
\[
\mathrm{SSIM}(\mathbf{u},\mathbf{u}_{c}) := \frac{(2\mu_{\mathbf{u}}\mu_{\mathbf{u}_{c}} + C_{1})(2\sigma_{\mathbf{u}\mathbf{u}_{c}} + C_{2})}
{(\mu_{\mathbf{u}}^2+\mu_{\mathbf{u}_{c}}^2 + C_{1})(\sigma_{\mathbf{u}}^2+\sigma_{\mathbf{u}_{c}}^2 + C_{2})},
\]
where $\sigma_{\mathbf{u}}$, $\sigma_{\mathbf{u}_{c}}$, $\sigma_{\mathbf{u}\mathbf{u}_{c}}$, $\mu_{\mathbf{u}}$, $\mu_{\mathbf{u}_{c}}$ denote standard deviations, cross-covariance, and local means of images $\mathbf{u}$ and $\mathbf{u}_{c}$, respectively, $C_1$ and $C_2$ are two constants \cite{Wang2004SSIM}.

\subsection{Gaussian denoising by the EE model}

\begin{figure}[]
\centering
\begin{subfigure}{0.215\textwidth}
    \includegraphics[width = \linewidth]{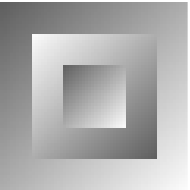}
    \caption{\#1$(60\times 60)$}
\end{subfigure}
\begin{subfigure}{0.215\textwidth}
    \includegraphics[width = \linewidth]{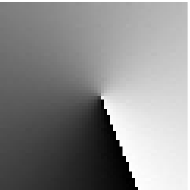}
    \caption{\#2$(60\times 60)$}
\end{subfigure}
\begin{subfigure}{0.215\textwidth}
    \includegraphics[width = \linewidth]{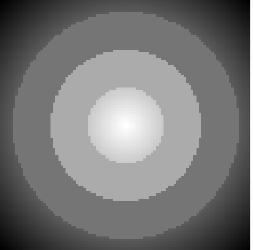}
    \caption{\#3$(100\times 100)$}
\end{subfigure}
\begin{subfigure}{0.215\textwidth}
    \includegraphics[width = \linewidth]{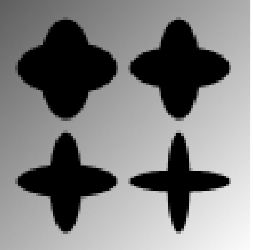}
    \caption{\#4$(100\times 100)$}
\end{subfigure}\\ \vspace{-1em}
\begin{subfigure}{0.215\textwidth}
    \includegraphics[width = \linewidth]{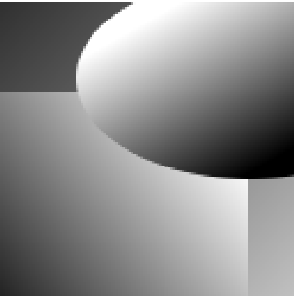}
    \caption{\#5$(128\times 128)$}
\end{subfigure}
\begin{subfigure}{0.215\textwidth}
    \includegraphics[width = \linewidth]{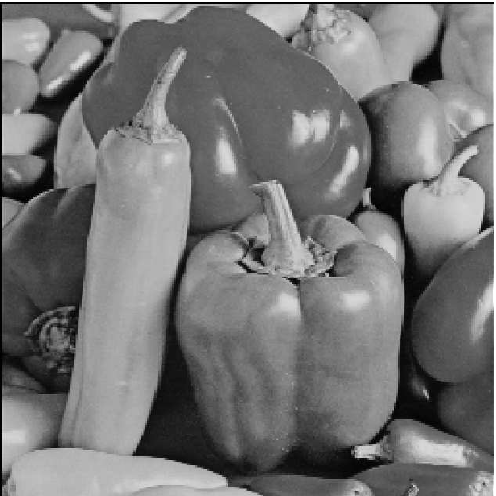}
    \caption{\#6$(256\times 256)$}
\end{subfigure}
\begin{subfigure}{0.215\textwidth}
    \includegraphics[width = \linewidth]{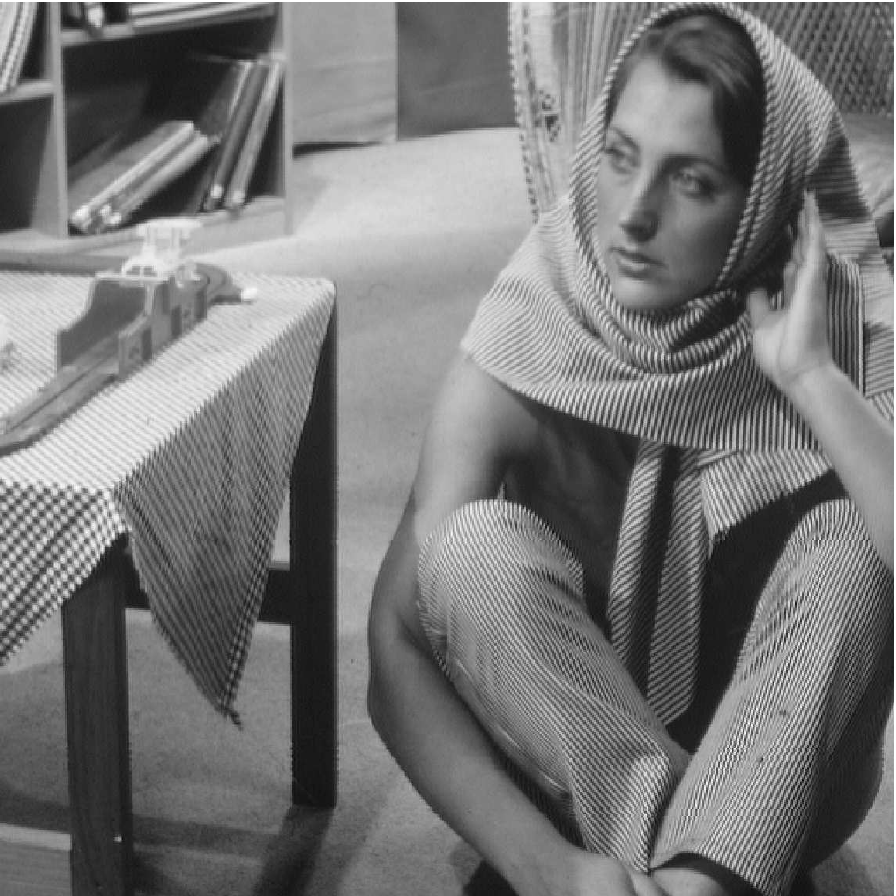}
    \caption{\#7$(512\times 512)$}
\end{subfigure}
\begin{subfigure}{0.215\textwidth}
    \includegraphics[width = \linewidth]{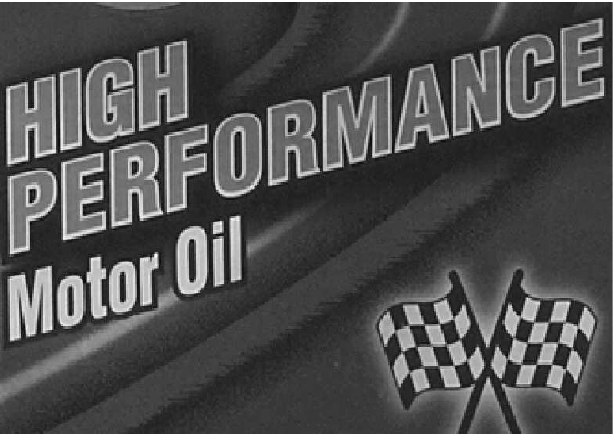}
    \caption{\#8$(332\times 216)$}
\end{subfigure} \vspace{-1em}
  \caption{Ground-truth images for Gaussian denoising.}
  \label{groundtruth_images}
\end{figure}

\begin{table}[]
	\caption{Comparison of the PSNR and SSIM values among the three algorithms.}
	\label{tab-comparison-PSNR-and-SSIM}
    \begin{center}
         \begin{tabular}{|c|c|c|c|c|c|c|}
         \hline
         \multirow{2}[0]{*}{Image} & \multicolumn{3}{c|}{PSNR} & \multicolumn{3}{c|}{SSIM} \\
     \cline{2-7}          & DGT   & HWC   & HALM  & DGT   & HWC   & HALM \\
         \hline
         \#1     & 30.86  & 33.04  & 32.93  & 0.9303  & 0.9348  & 0.9317  \\
         \hline
         \#2     & 33.06  & 35.13  & 34.63  & 0.9511  & 0.9587  & 0.9664  \\
         \hline
         \#3     & 34.88  & 36.33  & 37.33  & 0.9518  & 0.9582  & 0.9629  \\
         \hline
         \#4     & 31.54  & 32.38  & 34.48  & 0.8565  & 0.8613  & 0.8795  \\
         \hline
         \#5     & 35.57  & 38.08  & 38.09  & 0.9703  & 0.9695  & 0.9697  \\
         \hline
         \#6     & 33.16  &  $/$  & 33.10  & 0.9071  & $/$  & 0.9048  \\
         \hline
         \#7     & 31.04  &   $/$   & 31.31  & 0.8509  &    $/$   & 0.8690  \\
         \hline
         \#8     & 32.47  &   $/$    & 32.39  & 0.9158  &    $/$   & 0.9019  \\
         \hline
         \end{tabular}%
    \end{center}
\end{table}%

\newcolumntype{R}{>{$}r<{$}}
\begin{table}[]
    \caption{Computational cost comparison among the three algorithms.}
    \label{tab-comparison-computational-costs}
    \begin{center}
         \begin{tabular}{|c|c|S[table-format=3.2]|S[table-format=3.2]|S[table-format=3.2]|}
         \hline
         Image & Method & \multicolumn{1}{c|}{Iterations} & \multicolumn{1}{c|}{Time (sec)} & \multicolumn{1}{c|}{Average time per iteration} \\
         \hline
         \multirow{3}[0]{*}{\#1 ($60\times60$)} & DGT   & 171   & 0.4686 & 0.0027  \\
     \cline{2-5}          & HWC   & 132   & 68.3448 & 0.5178  \\
     \cline{2-5}          & HALM  & 119   & 0.2099 & 0.0018  \\
         \hline
         \multirow{3}[0]{*}{\#2 ($60\times60$)} & DGT   & 262   & 0.6378 & 0.0024  \\
     \cline{2-5}          & HWC   & 113   & 59.8544 & 0.5297  \\
     \cline{2-5}          & HALM  & 152    & 0.2656 & 0.0017  \\
         \hline
         \multirow{3}[0]{*}{\#3 ($100\times100$)} & DGT   & 319   & 1.8379 & 0.0058  \\
     \cline{2-5}          & HWC   & 108   & 213.2229 & 1.9743  \\
     \cline{2-5}          & HALM  & 220   & 0.5803 & 0.0026  \\
         \hline
         \multirow{3}[0]{*}{\#4 ($100\times100$)} & DGT   & 447   & 2.5982 & 0.0058  \\
     \cline{2-5}          & HWC   & 112   & 211.2046 & 1.8858  \\
     \cline{2-5}          & HALM  & 168   & 0.4346 & 0.0026  \\
         \hline
         \multirow{3}[0]{*}{\#5 ($128\times128$)} & DGT   & 287   & 2.3686 & 0.0083  \\
     \cline{2-5}          & HWC   & 90    & 386.0862 & 4.2898  \\
     \cline{2-5}          & HALM  & 133   & 0.4714 & 0.0035  \\
         \hline
         \multirow{3}[0]{*}{\#6 ($256\times256$)} & DGT   & 357   & 8.9341 & 0.0250  \\
     \cline{2-5}          & HWC   & $/$   & $/$ & $/$  \\
     \cline{2-5}          & HALM  & 184   & 2.6032 & 0.0142  \\
         \hline
         \multirow{3}[0]{*}{\#7 ($512\times512$)} & DGT   & 256   & 38.7484 & 0.1514  \\
     \cline{2-5}          & HWC   &    $/$   &    $/$   &   $/$\\
     \cline{2-5}          & HALM  & 125   & 8.4747 & 0.0678  \\
         \hline
         \multirow{3}[0]{*}{\#8 ($216\times332$)} & DGT   & 384   & 12.1821 & 0.0317  \\
     \cline{2-5}          & HWC   &   $/$    &   $/$    &  $/$\\
     \cline{2-5}          & HALM  & 238   & 3.7672 & 0.0158  \\
         \hline
\multirow{3}[0]{*}{Average} & DGT   & 310   & 8.4720 & 0.0273  \\
     \cline{2-5}          & HWC   &   $/$    &   $/$    &  $/$\\
     \cline{2-5}          & HALM  & 167   & 2.1009 & 0.0126  \\
         \hline
         \end{tabular}%
    \end{center}
\end{table}%

\begin{figure}[]
\centering
\begin{subfigure}{0.24\textwidth}
    \includegraphics[width = \linewidth]{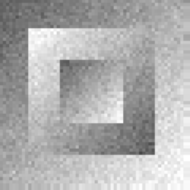}
    \caption{Noisy}
\end{subfigure}
\begin{subfigure}{0.24\textwidth}
    \includegraphics[width = \linewidth]{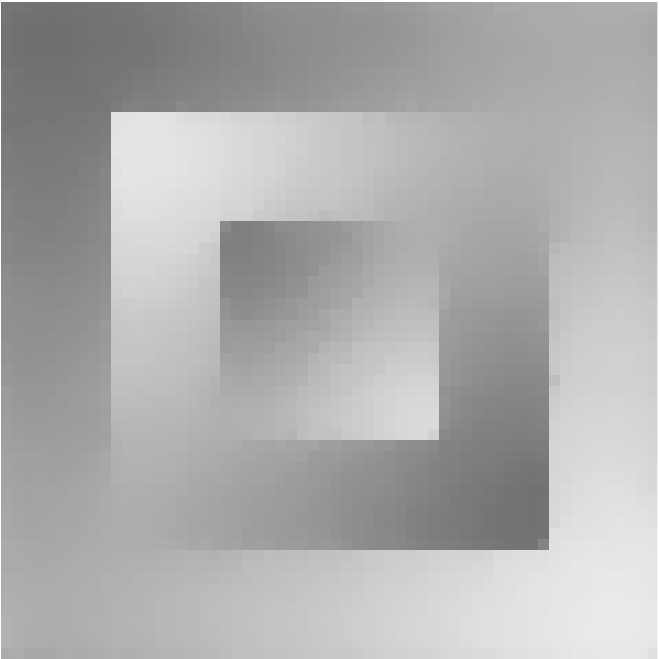}
    \caption{DGT}
\end{subfigure}
\begin{subfigure}{0.24\textwidth}
    \includegraphics[width = \linewidth]{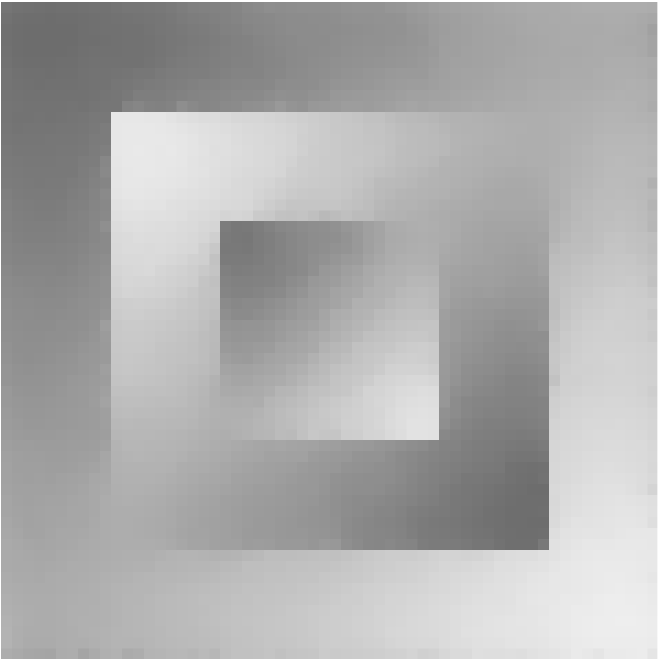}
    \caption{HWC}
\end{subfigure}
\begin{subfigure}{0.24\textwidth}
    \includegraphics[width = \linewidth]{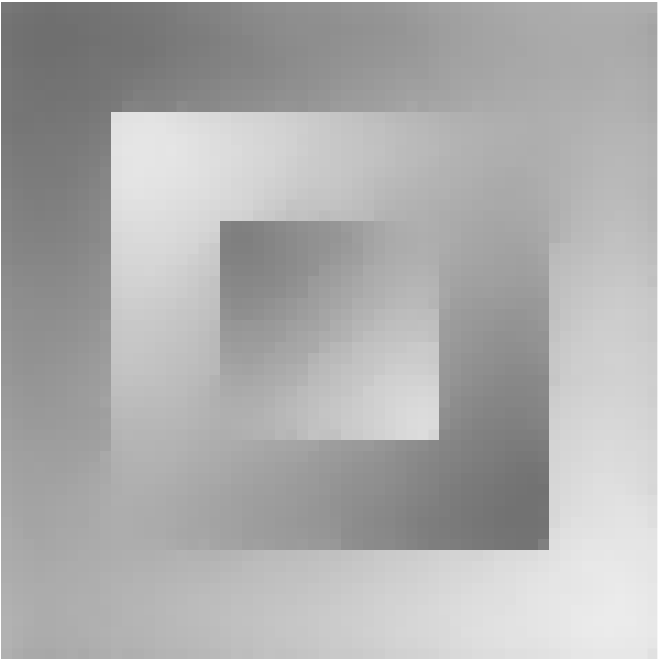}
    \caption{HALM}
\end{subfigure} \vspace{-1em}
\caption{Gaussian denoising results by solving the EE model. (a) The noisy ``\#1"  image with the additive Gaussian noise following $\mathcal{N}(0,0.0015)$. The denoised images: (b) the DGT algorithm, (c) the HWC  algorithm, (d) the proposed  HALM algorithm.}
\label{fig-HALM-denoising}
\end{figure}

\begin{figure}[]
    \centering
    \includegraphics[width = 0.6\textwidth]{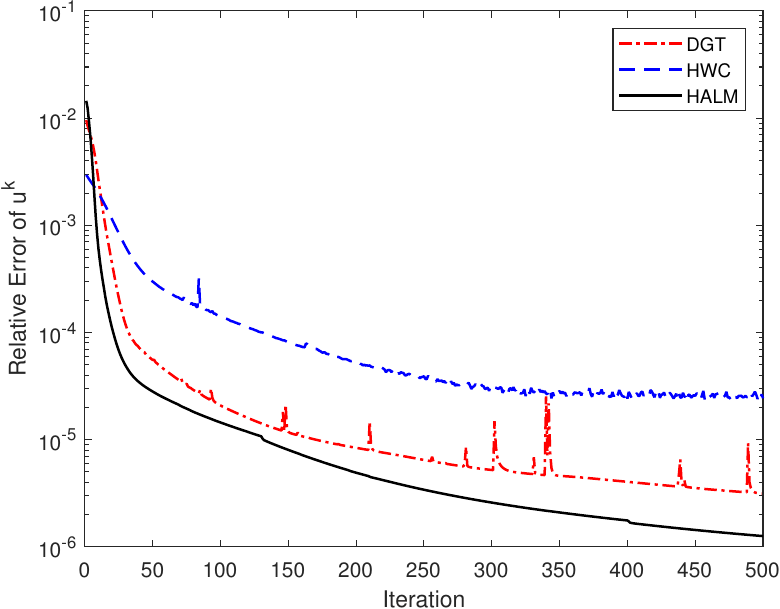}
    \caption{Relative errors of the  DGT, HWC, and HALM algorithms. This example is carried out on the \#1 image.}
    \label{fig-RelErrComparison}
\end{figure}


In this numerical experiment, we compare the HALM algorithm with two state-of-art algorithms,  the DGT and HWC algorithms, to solve the EE model.  The eight ground-truth images are shown in \cref{groundtruth_images}. Images \#1 to \#5 are synthetic smoothed images, and images \#6 to \#8 are natural images. We add random noises following the normal distribution $\mathcal{N}(0,0.0015)$ to these images and then employ the HALM, DGT, and HWC algorithms to denoise the degraded images. The source MATLAB codes of the DGT and HWC algorithms are kindly provided by the authors of \cite{2019DengOS} and \cite{He2021Penalty}, respectively.
We set the same maximum number of iterations for the HALM, DGT, and HWC algorithms at $N_{\text{iter}}=500$.
The tolerance of the relative error of $\mathbf{u}^{k}$ given by \eqref{eq-tolerance} for the HALM and DGT algorithms is $tol = 10^{-5}$ and that for HWC is $tol = 10^{-4}$. Note that any algorithm that runs for more than 1000 seconds in our experiment will be forced to terminate.

For a fair comparison, we adopt the suggestions for parameter setting of the DGT algorithm in \cite{2019DengOS} and the HWC algorithm in \cite{He2021Penalty}. It is worth noting that both the DGT and HALM algorithms have four parameters, while the HWC algorithm has nine.
 We use the strategy of \cite{2019DengOS} for parameter $\gamma$ of the DGT algorithm  and set parameters at $c=0.01$, $d=100$, $\epsilon = 0.01$, $\mu = 0.1$ and $\theta = 0.9$ for the HWC algorithm. We choose $\tau_{k}=\tau = 0.1$ in the HALM algorithm. For other parameters in the HALM, DGT, and HWC algorithms, we manually tune them to achieve better PSNR values.

\Cref{tab-comparison-PSNR-and-SSIM} shows the comparison of the PSNR and SSIM values among the three algorithms.
The results obtained from the HWC algorithm for images \#6, \#7, and \#8 are missing because the algorithm has been running for over 1000 seconds and hasn't reached the stop criterion.
We observe that the HALM algorithm generates comparable high-quality restored images like the DGT and HWC algorithms. The denoised images generated by the HALM, DGT, and HWC algorithms for the \#1 image are shown in \Cref{fig-HALM-denoising}.

We record the computational costs, including the total number of iterations and the CPU times, in \cref{tab-comparison-computational-costs}. 
The HALM algorithm is the fastest, followed by the DGT and HWC algorithms, respectively. Especially 
due to the lower computational cost in each iteration and the fewer iterations needed in the  HALM algorithm,  the average running time in this method is about $1/4$ of that in the DGT algorithm.
The HWC algorithm is the slowest due to the cost of updating the coefficient matrices in each iteration at each pixel.
The computational time of the HWC algorithm increases rapidly with the number of image pixels.  

\Cref{fig-RelErrComparison} presents the evolution of relative errors of $\mathbf{u}^{k}$  using the DGT, HWC, and HALM algorithms for restoring the \#1 image. The relative error of the HALM algorithm decreases as the number of iterations increases, numerically validating the convergence of the algorithm,
while the relative error of the DGT algorithm oscillates when its number of iterations exceeds 100.
We can also see that the relative error of the HWC algorithm falls off rapidly in the initial 150 iterations, but its numerical convergence rate is lower than that of HALM.

\begin{figure}[]
    \centering
\begin{tabular}{@{}c@{\hskip 1ex}c@{}}    
    \begin{subfigure}{0.485\textwidth}
        \includegraphics[width = \linewidth]{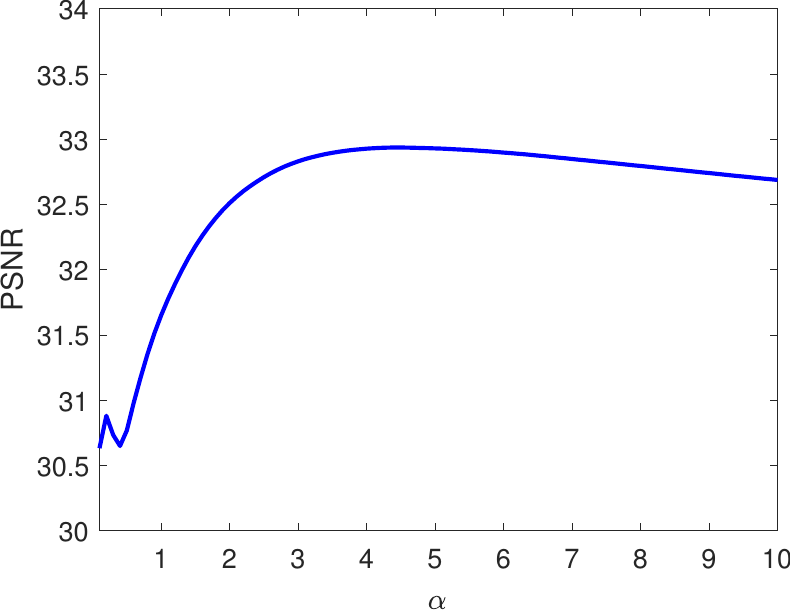}
    \end{subfigure} &
    \begin{subfigure}{0.48\textwidth}
        \includegraphics[width = \linewidth]{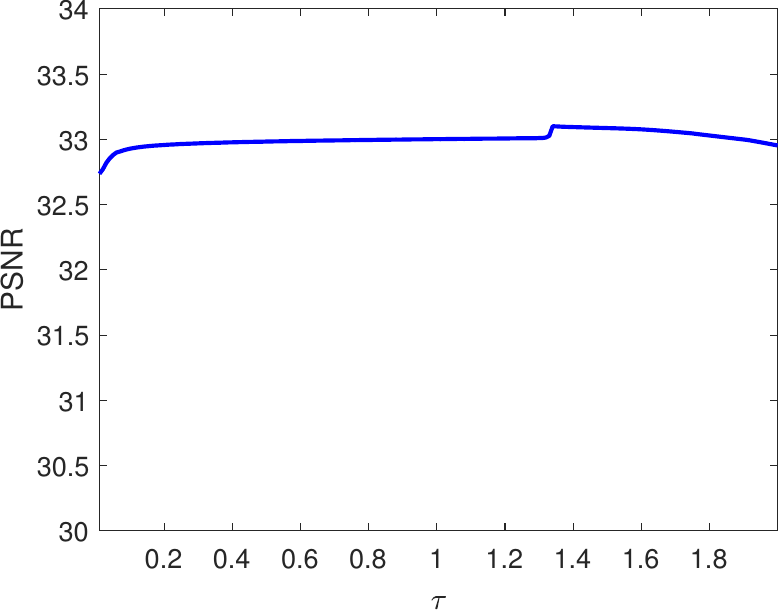}
    \end{subfigure}
\end{tabular}    
    \caption{Performances of the proposed HALM under different $\alpha$ and $\tau$. This example is carried out on the \#1 image.}
    \label{fig-Parameter-test}
\end{figure}

At the end of this part, we conduct the experiment to discuss the algorithmic parameters $\alpha$ and $\tau$ in the HALM algorithm.
\Cref{fig-Parameter-test}(a) shows the PSNR values of the recovered \#1 image using the HALM algorithm with $\alpha \in [0.1,10]$. The penalty parameter $\alpha$ in \eqref{DisrelaxationREEM-1} should be big enough inferred from \cref{fig-Parameter-test}(a),  which is consistent with the discussion at the end of  section \ref{sec-model}.
We suggest use parameter $\alpha$ in $[1,10]$.
We also test parameter $\tau \in [0.01,2]$ and
show the PSNR values of the recovered \#1 image in \cref{fig-Parameter-test}(b). Basically, the parameter $\tau$ should be chosen small enough following the condition in \cref{Sufficient_descent}. 
Here, we empirically choose $\tau \in [0.1, 1.5]$ with $\tau = 0.1$ as the default.

\subsection{Extensive tests} 

This subsection presents more experiments to demonstrate various aspects of the HALM algorithm. These include examining the effects of different boundary conditions and evaluating the performance of denoising binary, and real color images.

\begin{figure}[]
\centering
\begin{tabular}{@{}c@{\hskip 0.5ex}c@{\hskip 0.5ex}c@{\hskip 0.5ex}c@{}}
\begin{subfigure}{0.24\textwidth}
    \includegraphics[width = \linewidth]{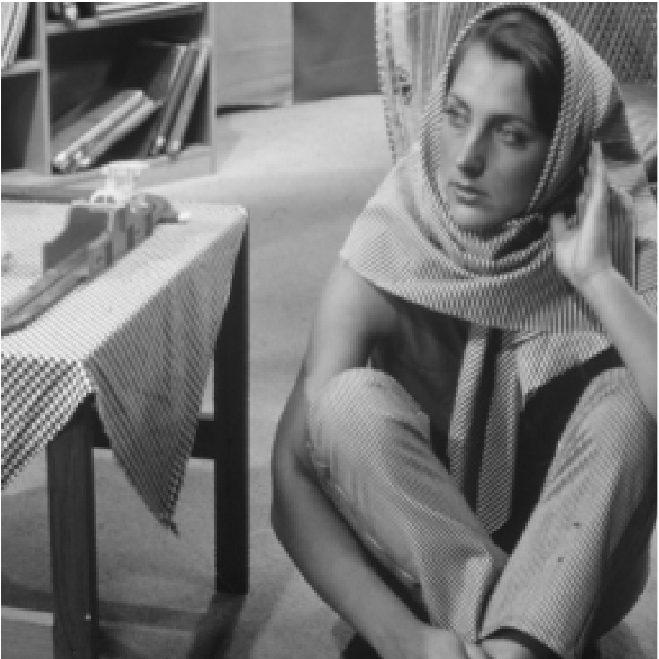}
    \caption{Ground-truth}
\end{subfigure} &
\begin{subfigure}{0.24\textwidth}
    \includegraphics[width = \linewidth]{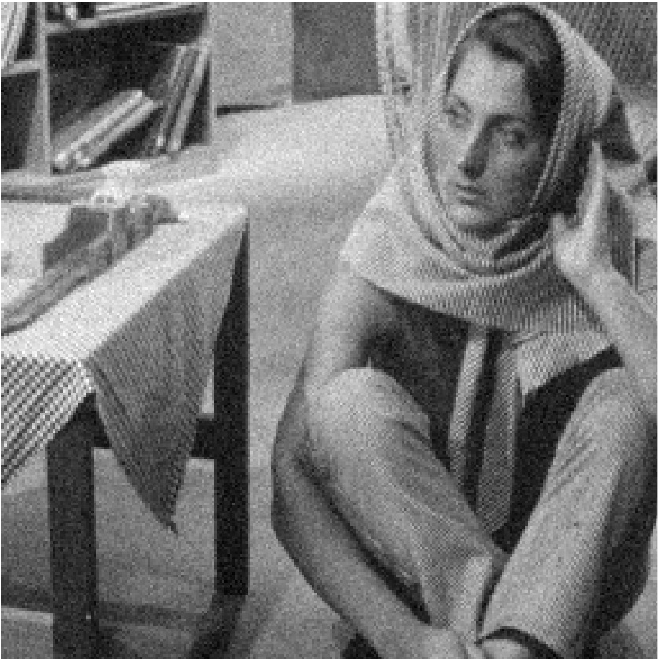}
    \caption{Noisy}
\end{subfigure} &
\begin{subfigure}{0.24\textwidth}
    \includegraphics[width = \linewidth]{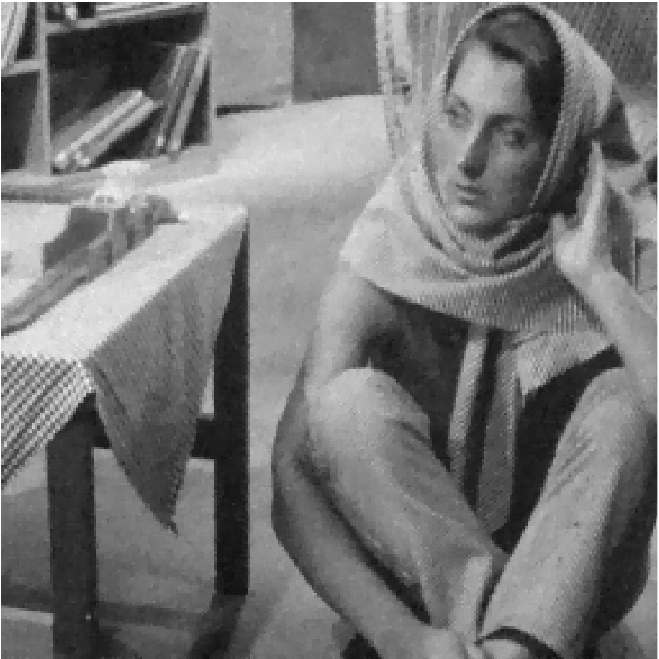}
    \caption{FFT. Time: 3.82s}
\end{subfigure} &
\begin{subfigure}{0.24\textwidth}
    \includegraphics[width = \linewidth]{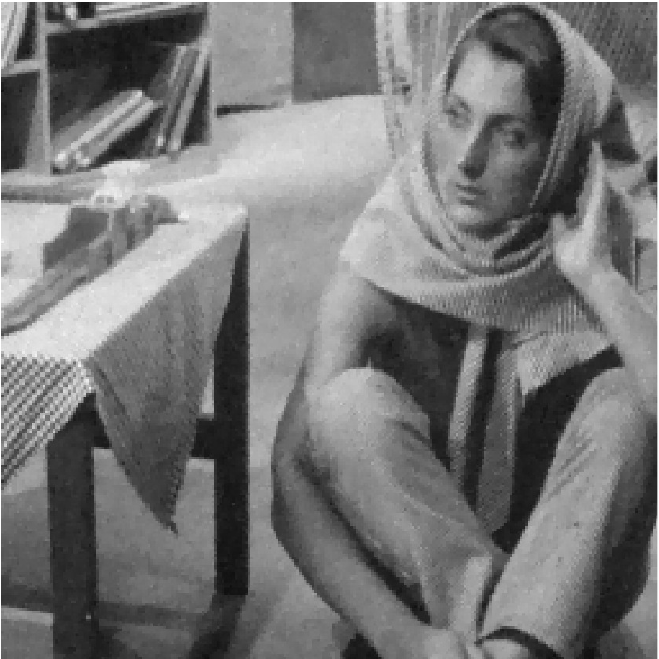}
    \caption{CG. Time: 5.64s}
\end{subfigure}
\end{tabular} \vspace{-1em}
\caption{Gaussian denoising results by solving the EE model with different boundary conditions. (a) Ground-truth ``barbara"  image $(256\times 256)$. (b) Noisy image with additive Gaussian noise (mean 0, variance 0.0015). (c) Image denoised by HALM algorithm under the periodic boundary condition with PSNR/SSIM values of 31.76/0.8866. (d) Image denoised by HALM algorithm under the Neumann boundary condition with PSNR/SSIM values of 31.78/0.8874.}
\label{fig-HALM-FFTvsCG}
\end{figure}

In the first experiment, we discuss the effects of different boundary conditions when using the HALM algorithm to solve the EE model. We compare the performance of the HALM algorithm under periodic and Neumann boundary conditions. For the Neumann case, we solve the $u$-subproblem  using the CG method. The denoising results for the ``barbara"  image $(256\times 256)$ are shown in \Cref{fig-HALM-FFTvsCG}. The denoised image under the periodic boundary condition achieves a PSNR of 31.76 and an SSIM of 0.8866, while the Neumann boundary condition result has a PSNR of 31.78 and an SSIM of 0.8874. The HALM algorithm produces comparable quantitative results with both boundary conditions. However, the periodic case is faster, with a CPU time of 3.82s, versus 5.64s for the Neumann case. This is because the periodic boundary condition allows using FFT to solve the $u$-subproblem.

\begin{figure}[]
\centering
\begin{tabular}{@{}c@{\hskip 0.5ex}c@{\hskip 0.5ex}c@{\hskip 0.5ex}c@{}}
\begin{subfigure}{0.24\textwidth}
    \includegraphics[width = \linewidth]{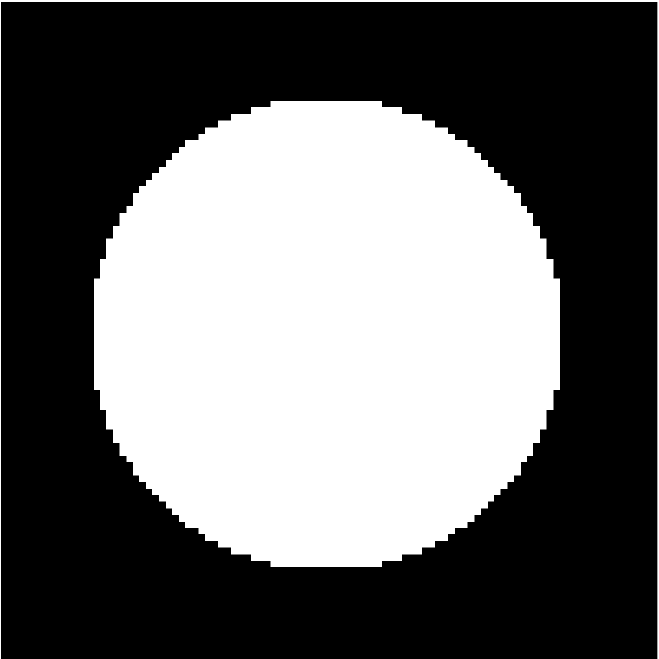}
    \caption{Ground-truth}
\end{subfigure} &
\begin{subfigure}{0.24\textwidth}
    \includegraphics[width = \linewidth]{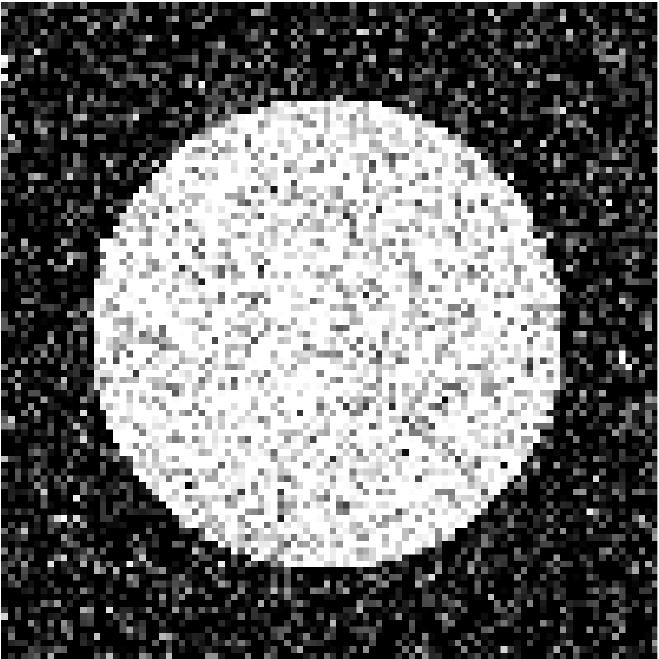}
    \caption{Noisy}
\end{subfigure} &
\begin{subfigure}{0.24\textwidth}
    \includegraphics[width = \linewidth]{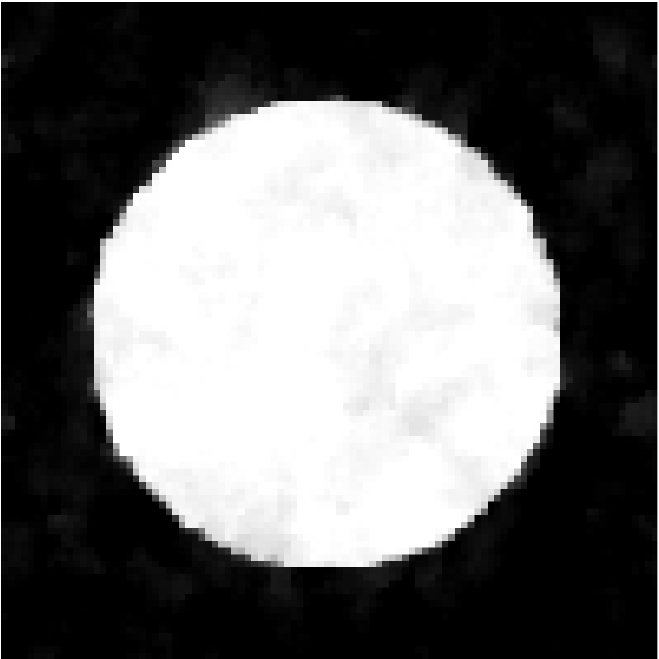}
    \caption{DGT}
\end{subfigure} &
\begin{subfigure}{0.24\textwidth}
    \includegraphics[width = \linewidth]{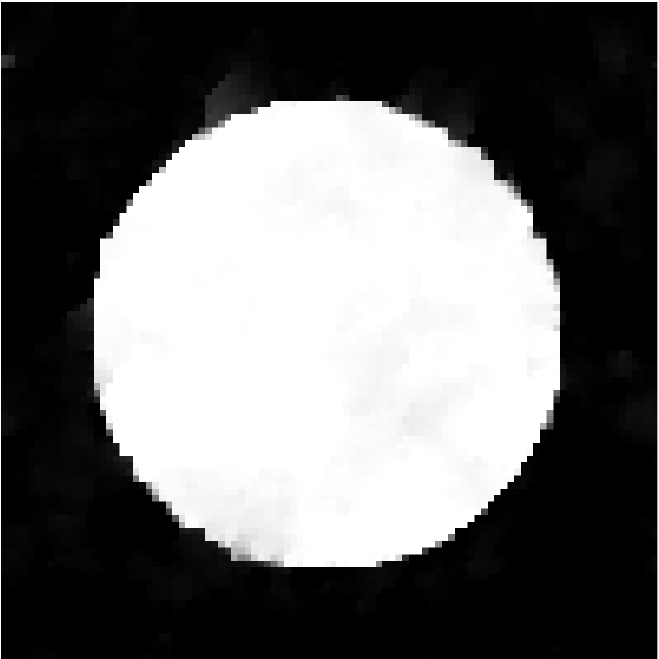}
    \caption{HALM}
\end{subfigure}
\end{tabular} \vspace{-1em}
\caption{Gaussian denoising results of binary images by solving EE model. (a) Ground-truth ``circle" image $(100\times 100)$. (b) Noisy image with additive Gaussian noise (mean 0, variance 0.1), with PSNR/SSIM values of 10.07/0.0953. (c) Image denoised by DGT algorithm with PSNR/SSIM values of 25.27/0.7358. (d) Image denoised by HALM algorithm with PSNR/SSIM values 26.07/0.8194.}
\label{fig-Binary-denoising}
\end{figure}

In the second experiment, we apply the HALM algorithm to the EE model to denoise binary images and compare them with the DGT algorithm. We test on a $(100\times 100)$ ``circle" image corrupted by the additive Gaussian noise following $\mathcal{N}(0,0.1)$, as shown in \Cref{fig-Binary-denoising} (a) and (b). The denoising results by the DGT and HALM algorithms are displayed in \Cref{fig-Binary-denoising} (c) and (d). The denoised image by the DGT algorithm has a PSNR of 25.27 and an SSIM of 0.7358, while the HALM algorithm result achieves a PSNR of 26.07 and an SSIM of 0.8194. As the HALM algorithm is a level-set approach, it can effectively preserve the circle shape while removing noise. This demonstrates an advantage over DGT for binary image denoising.

\begin{figure}[]
\centering
\begin{tabular}{c@{\hskip 0.5ex}c@{\hskip 0.5ex}c}
  \begin{subfigure}{0.24\textwidth}
    \includegraphics[width = \linewidth]{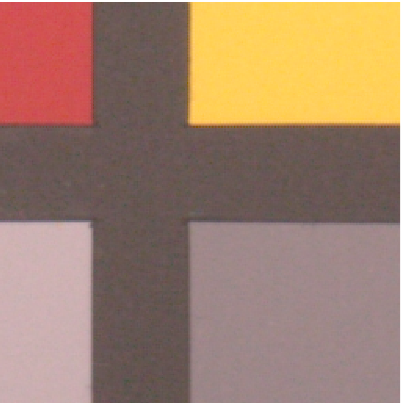}
    \caption{Ground-truth}
\end{subfigure} & 
\begin{subfigure}{0.24\textwidth}
    \includegraphics[width = \linewidth]{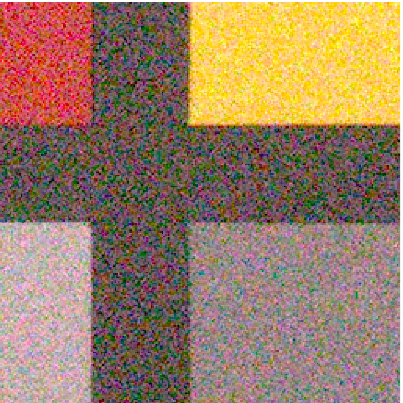}
    \caption{Noisy}
\end{subfigure} & 
\begin{subfigure}{0.24\textwidth}
    \includegraphics[width = \linewidth]{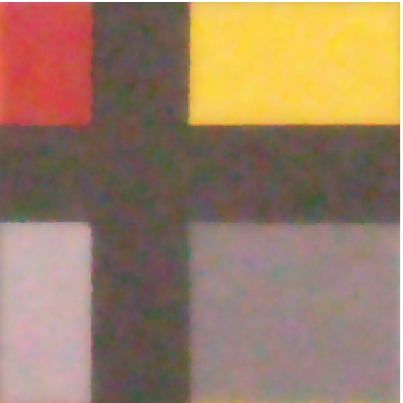}
    \caption{HALM}
\end{subfigure} \\ [-1em]
\begin{subfigure}{0.24\textwidth}
    \includegraphics[width = \linewidth]{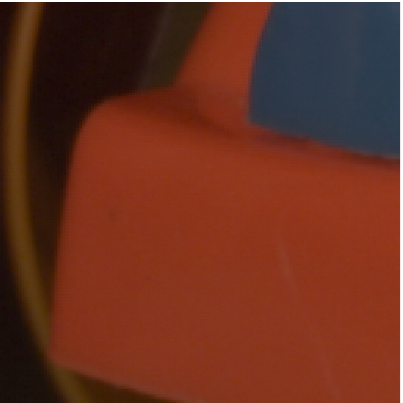}
    \caption{Ground-truth}
\end{subfigure} & 
\begin{subfigure}{0.24\textwidth}
    \includegraphics[width = \linewidth]{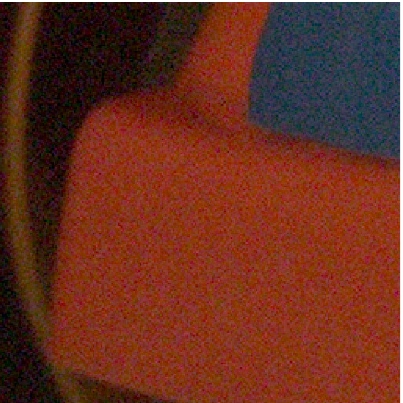}
    \caption{Noisy}
\end{subfigure} & 
\begin{subfigure}{0.24\textwidth}
    \includegraphics[width = \linewidth]{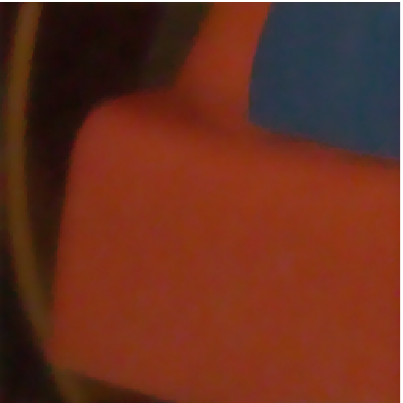}
    \caption{HALM}
\end{subfigure}
\end{tabular}
\vspace{-1em}
\caption{Denoising results of real noisy color images by solving EE model with the HALM algorithm. (a) and (d) show the ground-truth images $(256\times 256)$. (b) and (e) show the noisy images, with PSNR/SSIM values of 18.39/0.4102 and 26.27/0.8755, respectively. (c) and (f) display the denoised images with improved PSNR/SSIM values of 28.69/0.8879 and 36.58/0.9875, respectively.}
\label{fig-Real-Color-denoising}
\end{figure}

\begin{table}[]
	\caption{Denoising results in average PSNR and SSIM values on the SIDD}
	\label{tab-SIDD-PSNR-and-SSIM}
    \begin{center}
         \begin{tabular}{|c|c|c|c|c|}
         \hline
         \multirow{2}[0]{*}{Method} & \multicolumn{2}{c|}{SIDD Validation} & \multicolumn{2}{c|}{SIDD Benchmark} \\
     \cline{2-5}         
     & PSNR   & SSIM   & PSNR   & SSIM \\ \hline
     DGT & 33.04 & 0.8895 & 33.00 & 0.8860\\ \hline
     HALM & 33.22 & 0.8894 & 33.17 & 0.8850\\ \hline
         \end{tabular}%
    \end{center}
\end{table}%

In the third experiment, we employ the HALM algorithm to the EE model to restore real noisy images in standard RGB color spaces
from the Smartphone Image Denoising Dataset (SIDD) \cite{SIDD_2018_CVPR}. 
We first denoise each color channel (R, G, B) separately and then recombine the channels to obtain the final denoised color image. 
\Cref{tab-SIDD-PSNR-and-SSIM} reports the denoising results in terms of average PSNR and SSIM values over the 1280 noisy image blocks from the SIDD validation data and the 1280 blocks from the SIDD benchmark data. \Cref{fig-Real-Color-denoising} (b) and \Cref{fig-Real-Color-denoising} (e) display two real noisy images from the SIDD validation data, while \Cref{fig-Real-Color-denoising} (a) and \Cref{fig-Real-Color-denoising}  (d) show their corresponding ground-truth images. \Cref{fig-Real-Color-denoising} (c) presents one denoised image with a PSNR value of 26.27 and SSIM value of 0.8755. This denoised image exhibits much-improved quality compared to the original noisy image in \Cref{fig-Real-Color-denoising} (b), which has a PSNR of 19.39 and SSIM of 0.4012. The other denoised image in \Cref{fig-Real-Color-denoising} (f) with a PSNR of 36.58 and SSIM of 0.9875 also demonstrates significantly enhanced quality over the original noisy image in \Cref{fig-Real-Color-denoising} (e), which has a PSNR of 28.69 and SSIM of 0.8879. As shown in \Cref{fig-Real-Color-denoising}, the HALM algorithm effectively removes noise while preserving features. However, some artificial mosaics can be observed around the edges in the restored image \Cref{fig-Real-Color-denoising} (c), likely due to the independent channel-by-channel denoising process. To address this issue, a promising approach would be denoising multiple channels concurrently. We leave the exploration of joint multi-channel denoising with EE energy as future work.

\subsection{Speckle denoising}

\begin{figure}[]
    \centering
\begin{tabular}{@{}c@{\hskip 0.5ex}c@{\hskip 0.5ex}c@{\hskip 0.5ex}c@{}}
      \begin{subfigure}{ 0.24\textwidth}
      \begin{overpic}[width = \linewidth]{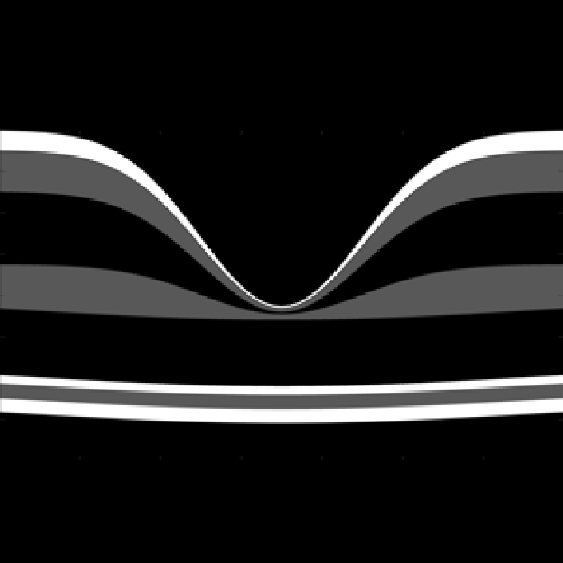}
      \put(40,40){\tikz\draw[red,thick,dashed] (0,0) rectangle (.60,.45);}
      \end{overpic}
      \caption{Ground-truth}
      \label{fig-synoct_images-a}
      \end{subfigure} &
      \begin{subfigure}{0.24\textwidth}
      \begin{overpic}[width = \linewidth]{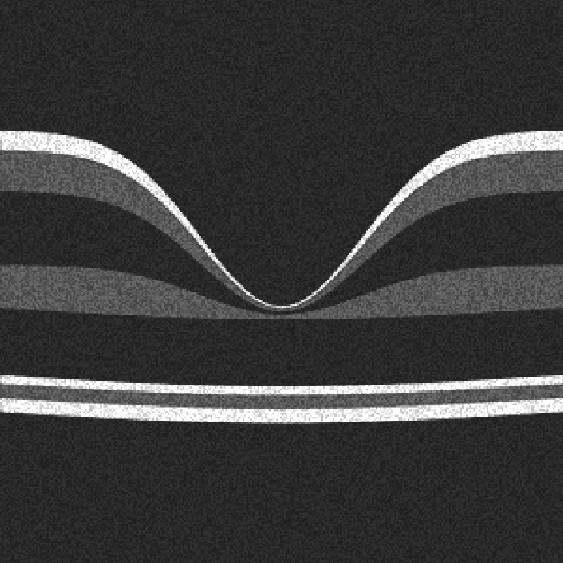}
          \put(40,40){\tikz\draw[red,thick,dashed] (0,0) rectangle (.60,.45);}
      \end{overpic}
      \caption{Noisy}
      \label{fig-synoct_images-b}
      \end{subfigure} &
      \begin{subfigure}{0.24\textwidth}
      \begin{overpic}[width = \linewidth]{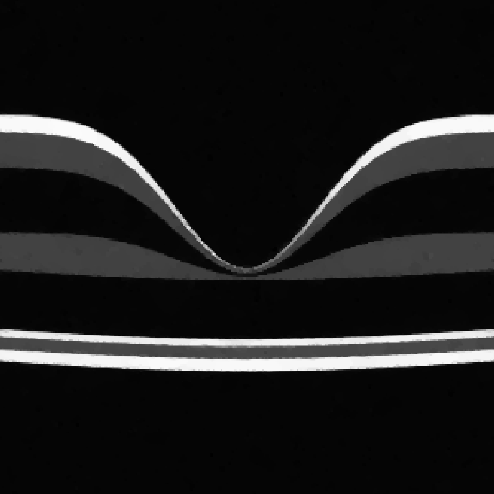}
          \put(40,40){\tikz\draw[red,thick,dashed] (0,0) rectangle (.60,.45);}
      \end{overpic}
      \caption{HWC}
      \label{fig-synoct_images-c}
      \end{subfigure} &
      \begin{subfigure}{0.24\textwidth}
      \begin{overpic}[width = \linewidth]{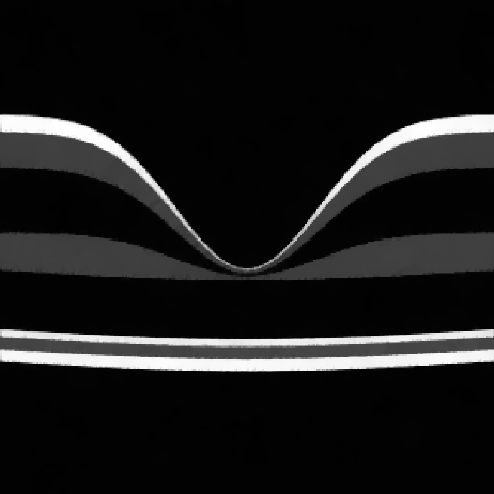}
          \put(40,40){\tikz\draw[red,thick,dashed] (0,0) rectangle (.60,.45);}
      \end{overpic}
      \caption{HALM} 
      \label{fig-synoct_images-d}
      \end{subfigure} \\[-1em]
      \begin{subfigure}{0.24\textwidth}
          \adjustbox{width = \linewidth,trim={.40\width} {.40\height} {.40\width} {.45\height},clip}
          {\includegraphics{figs/synoct_org.pdf}}
          \caption{Zoomed of (a)}
          \label{fig-synoct_images-e}
      \end{subfigure} &
      \begin{subfigure}{0.24\textwidth}
          \adjustbox{width = \linewidth,trim={.40\width} {.40\height} {.40\width} {.45\height},clip}
          {\includegraphics{figs/synoct_noisy.pdf}}
          \caption{Zoomed of (b)}
          \label{fig-synoct_images-f}
      \end{subfigure} &
      \begin{subfigure}{0.24\textwidth}
          \adjustbox{width = \linewidth,trim={.40\width} {.40\height} {.40\width} {.45\height},clip}
          {\includegraphics{figs/synoct_chen.pdf}}
          \caption{Zoomed of (c)}
          \label{fig-synoct_images-g}
      \end{subfigure} &
      \begin{subfigure}{0.24\textwidth}
          \adjustbox{width = \linewidth,trim={.40\width} {.40\height} {.40\width} {.45\height},clip}
          {\includegraphics{figs/synoct_our.pdf}}
          \caption{Zoomed of (d)}
          \label{fig-synoct_images-h}
      \end{subfigure}  
\end{tabular}    \vspace{-1em}
      \caption{Speckle denoising results for the synthetic OCT image by solving the EE model. First row: (a) Ground-truth OCT image $(300\times 300)$. (b) Noisy image with multiplicative speckle noise (variance 0.02). (c) Image denoised by HWC algorithm, PSNR = 27.09. (d)  Image denoised by proposed HALM algorithm, PSNR = 27.48. Second row:  Zoomed views of red boxes in (a), (b), (c), and (d) shown in (e), (f), (g), and (h) respectively.}
      \label{fig-synoct_images}
\end{figure}

In this experiment, we apply the HALM algorithm to the EE model to denoise the corrupted optical coherence tomography (OCT) image.
According to statistical optics, the noise in the OCT image is a multiplicative speckle noise.
To recover a high-quality OCT image, we first perform logarithmic compression on the degraded OCT image, which makes the noise in the converted image additive.
We denoise the transformed image by solving the EE model~\eqref{originalEEM} with the HALM algorithm.
Finally, we obtain the restored OCT image after exponential transformation.

We experiment with a synthetic OCT image generated by the Gaussian function provided in \cite{He2021Penalty}.
The variance of the speckle noise is 0.02.
The  HALM algorithm is compared with the HWC algorithm \cite{He2021Penalty}.
\Cref{fig-synoct_images}(a) is the ground-truth OCT image, \cref{fig-synoct_images}(b) is the noisy image, and \cref{fig-synoct_images}(c) and (d) are denoised images by the HWC  and HALM algorithm for the EE model, respectively.
The HALM algorithm performs well at removing the noise and preserving features for the OCT image inferred from \Cref{fig-synoct_images}.
It renders better PSNR values than the HWC algorithm, since the PSNR value of the denoised image using the  HALM algorithm is 27.48, and that using the  HWC algorithm is 27.09.

\begin{figure}[]
\centering
\begin{tabular}{@{}c@{\hskip 3ex}c@{}}
\begin{subfigure}{0.3\textwidth}
    \includegraphics[width = \linewidth]{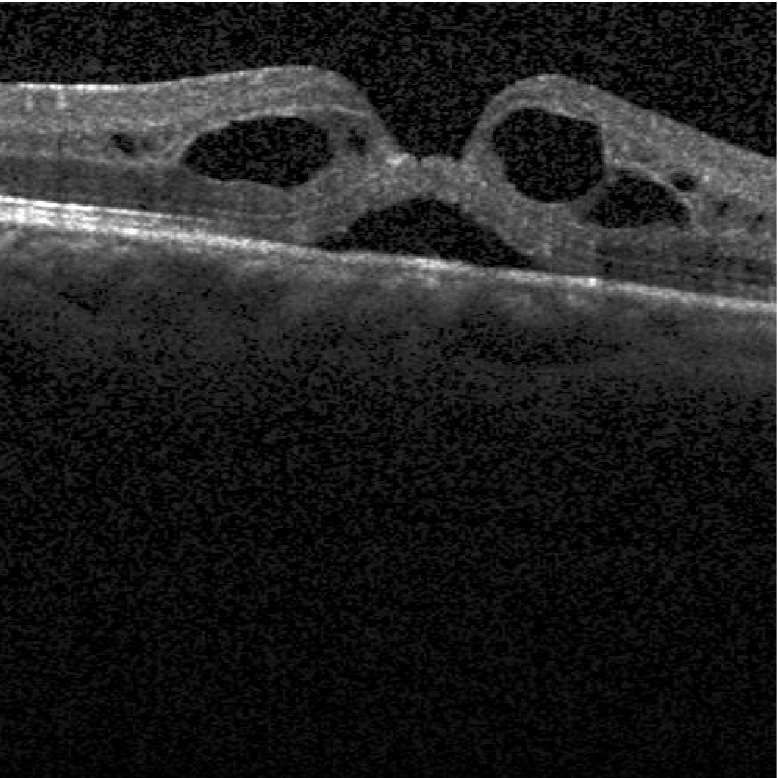}
\end{subfigure} &
\begin{subfigure}{0.3\textwidth}
    \includegraphics[width = \linewidth]{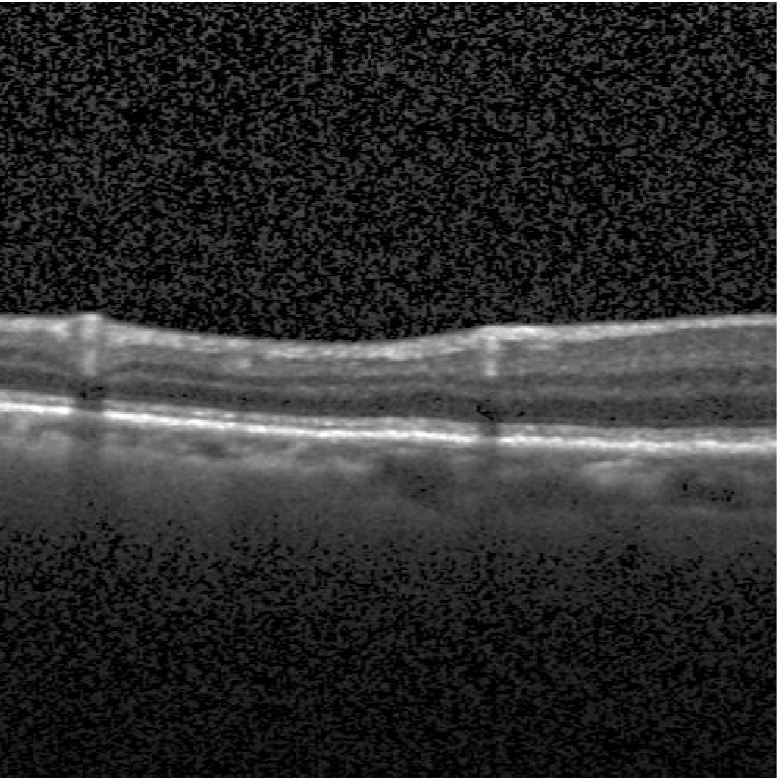}
\end{subfigure} 
\\ [2ex]
\begin{subfigure}{0.3\textwidth}
    \includegraphics[width = \linewidth]{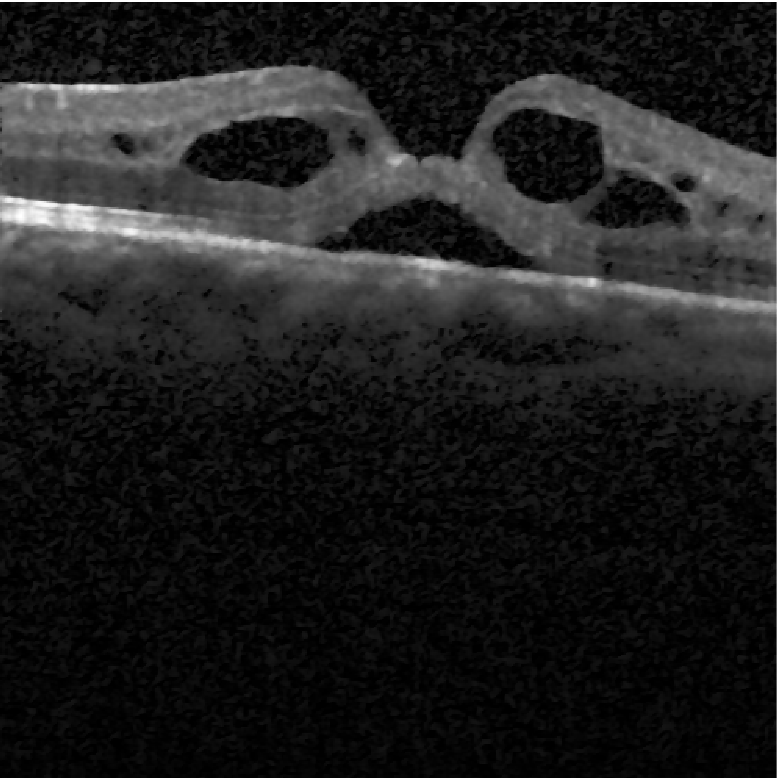}
\end{subfigure} &
\begin{subfigure}{0.3\textwidth}
    \includegraphics[width = \linewidth]{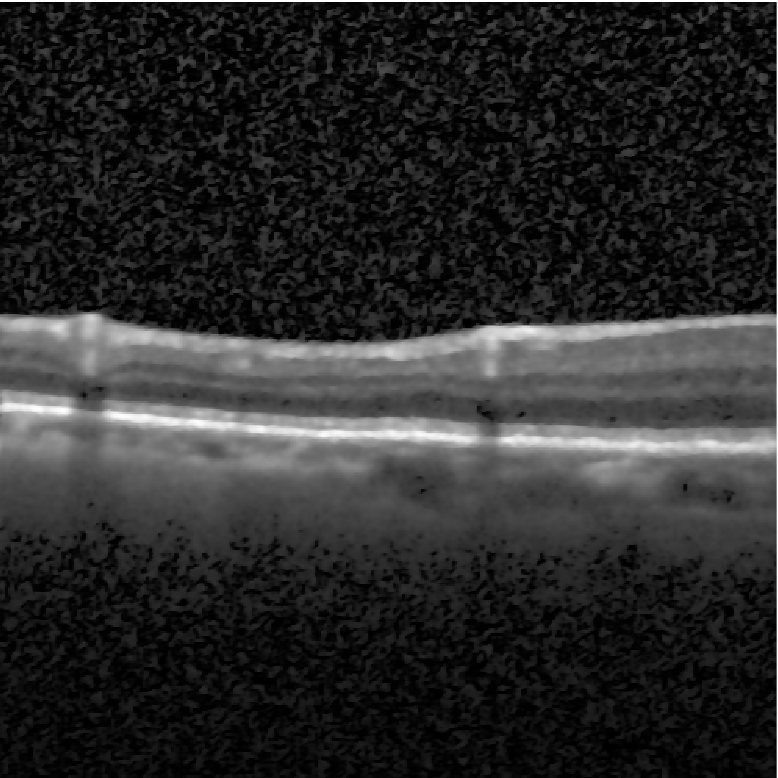}
\end{subfigure}
\end{tabular}
\caption{Speckle denoising results for real OCT images by solving the EE
model. The first row shows two noisy OCT images $(496\times 496)$ in the large dataset of labeled OCT and chest X-Ray images \cite{Kermany2018Cell}. 
The second row shows the corresponding denoised images by the proposed HALM algorithm.}
\label{fig-Real-OCT-denoising}
\end{figure}

We also test the HALM algorithm on denoising real OCT images from the large dataset of labeled OCT and chest X-ray images \cite{Kermany2018Cell}. Two representative OCT images are selected from the dataset. The results are shown in \Cref{fig-Real-OCT-denoising}. The noisy images are displayed in the first row, and the corresponding denoised results using the HALM algorithm are shown in the second row. As illustrated in \Cref{fig-Real-OCT-denoising}, the HALM algorithm effectively reduces  noise in the regions of interest while preserving important features and anatomical structures in the real OCT images.

\subsection{Gaussian denoising by the TRV model}

\begin{figure}[]
\centering
\begin{tabular}{@{}c@{\hskip 0.5ex}c@{\hskip 0.5ex}c@{\hskip 0.5ex}c@{}}
\begin{subfigure}{0.24\textwidth}
    \includegraphics[width = \linewidth]{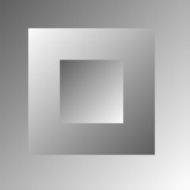}
    \caption{Ground-truth}
\end{subfigure} &
\begin{subfigure}{0.24\textwidth}
    \includegraphics[width = \linewidth]{figs/shading_TRV_noisy.pdf}
    \caption{Noisy}
\end{subfigure} &
\begin{subfigure}{0.24\textwidth}
    \includegraphics[width = \linewidth]{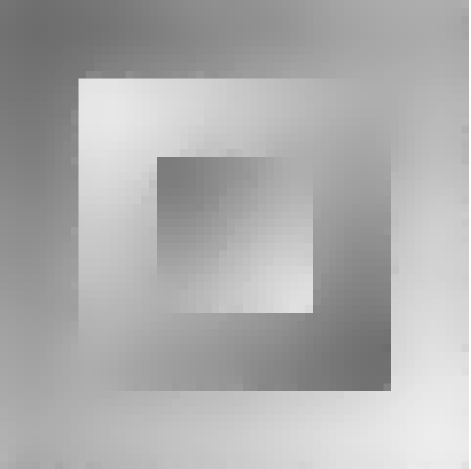}
    \caption{DTC}
\end{subfigure} &
\begin{subfigure}{0.24\textwidth}
    \includegraphics[width = \linewidth]{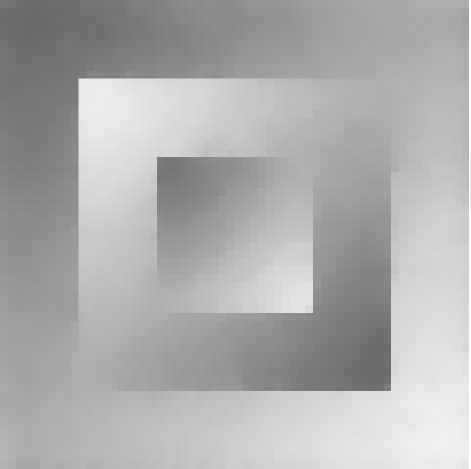}
    \caption{HALM-TRV}
\end{subfigure}
\end{tabular} \vspace{-1em}
\caption{Gaussian denoising results by solving the TRV model. (a) Ground-truth ``shading" image $(60\times 60)$. (b) Noisy image with additive Gaussian noise (mean 0, variance 0.0015). (c) Image denoised by DTC algorithm with PSNR/SSIM values of 32.08/0.9356. (d) Image denoised by HALM-TRV algorithm with PSNR/SSIM values 33.67/0.9410.}
\label{fig-TRV-denoising}
\end{figure}

In this experiment, we use the HALM algorithm described in \cref{sec-extension} to solve the TRV model, which is called the HALM-TRV algorithm, and compare it with the discrete total curvature (DTC) algorithm proposed in \cite{Duan_Min_discrete_total_curva}. The DTC method transfers the TRV model to a re-weighted total variation minimization problem and solves it by the alternating direction method of multipliers (ADMM) based algorithm. The source MATLAB code of the DTC method is kindly provided by the authors of \cite{Duan_Min_discrete_total_curva}. We experiment with the ``shading" image and degrade it with the additive Gaussian noise with a mean of 0 and a variance of 0.0015 (see \cref{fig-TRV-denoising} (a) and (b)). We set $a = 0.015$ and $b=0.005$ in the TRV model. For the HALM-TRV algorithm, we choose $\alpha = 4$ and $\tau=0.5$. For the DTC algorithm, the   parameters  are recommended by the authors of \cite{Duan_Min_discrete_total_curva}. \Cref{fig-TRV-denoising} (c) and (d) show the denoised images by the HALM-TRV and DTC algorithms, respectively. The DTC algorithm generates a restored image with a PSNR value of 32.08 and an SSIM value of 0.9356, and the HALM-TRV algorithm produces a denoised image with a PSNR value of 33.67 and an SSIM value of 0.9410. We also record the CPU running time of the two algorithms. The CPU time of the DTC algorithm is 1.2900 seconds, and that of the HALM-TRV algorithm is 0.2764 seconds. We notice that the proposed HALM-TRV algorithm outperforms the DTC algorithm in the quality of the denoised image and the computational cost for solving the TRV model. 

\section{Conclusions}\label{sec-conclusion}

We have proposed a novel bilinear decomposition for the EE model and developed a fast  HALM algorithm for its discrete form. We rigorously prove the convergence of the generated minimizing sequence from the algorithm and then validate it numerically. A host of  numerical experiments are conducted to demonstrate the performance of the  new HALM algorithm in its computation cost and the quality of the recovered images. The proposed algorithm has a great potential for general curvature-based  models. 

In reference to the latest operator-splitting algorithm presented in \cite{Liu2022Gaussian}, we need to point out that the reformulation strategy and the solution method for each subproblem devised for the HALM algorithm are quite different from the strategy developed in \cite{Liu2022Gaussian}. It seems the HALM algorithm cannot be used directly for the Gaussian curvature model to gain its computational efficiency since it will not lead to easy-to-solve sub-minimization problems and thereby lose its computational advantage. Nontrivial splitting technique should be developed.
The convergence of the HALM algorithm for the more general models has not yet been established when the smoothness of the objective function is lacking (e.g. TAC model and the Gaussian curvature regularization model \cite{Liu2022Gaussian}). They will be a part of our future work.

\section*{Acknowledgments}

We would like to thank Drs. Liangjian Deng, Fang He and Qiuxiang Zhong for providing the source codes of the DGT, HWC, and DTC methods, respectively, and for their fruitful discussions about the experiments.


\bibliographystyle{siamplain}
\bibliography{ref}

\begin{thebibliography}{10}

\bibitem{SIDD_2018_CVPR}
{\sc A.~Abdelhamed, S.~Lin, and M.~S. Brown}, {\em A high-quality denoising
  dataset for smartphone cameras}, in 2018 IEEE/CVF Conference on Computer
  Vision and Pattern Recognition (CVPR), 2018, pp.~1692--1700,
  \url{https://doi.org/10.1109/CVPR.2018.00182}.

\bibitem{Attouch2010Proximal}
{\sc H.~Attouch, J.~Bolte, P.~Redont, and A.~Soubeyran}, {\em Proximal
  alternating minimization and projection methods for nonconvex problems: An
  approach based on the {Kurdyka-Łojasiewicz} inequality}, Mathematics of
  Operations Research, 35 (2010), pp.~438--457,
  \url{https://doi.org/10.1287/moor.1100.0449}.

\bibitem{PALM}
{\sc J.~Bolte, S.~Sabach, and M.~Teboulle}, {\em Proximal alternating
  linearized minimization for nonconvex and nonsmooth problems}, Mathematical
  Programming, 146 (2013), \url{https://doi.org/10.1007/s10107-013-0701-9}.

\bibitem{Bredies2015convex}
{\sc K.~Bredies, T.~Pock, and B.~Wirth}, {\em A convex, lower semicontinuous
  approximation of {Euler’s} elastica energy}, SIAM Journal on Mathematical
  Analysis, 47 (2015), pp.~566--613, \url{https://doi.org/10.1137/130939493}.

\bibitem{Chambolle2019Total}
{\sc A.~Chambolle and T.~Pock}, {\em Total roto-translational variation},
  Numerische Mathematik, 142 (2019), pp.~611--666,
  \url{https://doi.org/10.1007/s00211-019-01026-w}.

\bibitem{Chan2002Euler}
{\sc T.~F. Chan, S.~H. Kang, and J.~Shen}, {\em Euler’s elastica and
  curvature-based inpainting}, SIAM Journal on Applied Mathematics, 63 (2002),
  pp.~564--592, \url{https://doi.org/10.1137/S0036139901390088}.

\bibitem{2019DengOS}
{\sc L.-J. Deng, R.~Glowinski, and X.-C. Tai}, {\em A new operator splitting
  method for the {Euler} elastica model for image smoothing}, SIAM Journal on
  Imaging Sciences, 12 (2019), pp.~1190--1230,
  \url{https://doi.org/10.1137/18M1226361}.

\bibitem{Duan2012Fast}
{\sc Y.~Duan, Y.~Wang, X.-C. Tai, and J.~Hahn}, {\em A fast augmented
  {Lagrangian} method for {Euler’s} ealstica model}, Numerical Mathematics:
  Theory, Methods and Applications, 6 (2012), pp.~47--71,
  \url{https://doi.org/10.4208/nmtma.2013.mssvm03}.

\bibitem{glowinski1989augmented}
{\sc R.~Glowinski and P.~Le~Tallec}, {\em Augmented Lagrangian and
  operator-splitting methods in nonlinear mechanics}, SIAM, 1989.

\bibitem{Glowinski2016}
{\sc R.~Glowinski, S.~Osher, and W.~Yin}, eds., {\em Splitting Methods in
  Communication, Imaging, Science, and Engineering}, Springer-Verlag New York,
  2016.

\bibitem{He2021Penalty}
{\sc F.~He, X.~Wang, and X.~Chen}, {\em A penalty relaxation method for image
  processing using {Euler's} elastica model}, SIAM Journal on Imaging Sciences,
  14 (2021), pp.~389--417, \url{https://doi.org/10.1137/20M1335601}.

\bibitem{Kermany2018Cell}
{\sc D.~S. Kermany, M.~Goldbaum, W.~Cai, et~al.}, {\em Identifying medical
  diagnoses and treatable diseases by image-based deep learning}, Cell, 172
  (2018), pp.~1122--1131.e9,
  \url{https://doi.org/https://doi.org/10.1016/j.cell.2018.02.010}.

\bibitem{Liu2022Gaussian}
{\sc H.~Liu, X.-C. Tai, and R.~Glowinski}, {\em An operator-splitting method
  for the {Gaussian} curvature regularization model with applications to
  surface smoothing and imaging}, SIAM Journal on Scientific Computing, 44
  (2022), pp.~A935--A963, \url{https://doi.org/10.1137/21M143772X}.

\bibitem{Liu2021Color}
{\sc H.~Liu, X.-C. Tai, R.~Kimmel, and R.~Glowinski}, {\em A color elastica
  model for vector-valued image regularization}, SIAM Journal on Imaging
  Sciences, 14 (2021), p.~717–748, \url{https://doi.org/10.1137/20M1354532}.

\bibitem{Liu2021variable}
{\sc Z.~Liu, Y.~Duan, C.~Wu, and X.-C. Tai}, {\em On variable splitting and
  augmented lagrangian method for total variation-related image restoration
  models}, in Handbook of Mathematical Models and Algorithms in Computer Vision
  and Imaging, K.~Chen, C.-B. Sch{\"o}nlieb, X.-C. Tai, and L.~Younces, eds.,
  Springer International Publishing, Cham, 2021, pp.~1--47,
  \url{https://doi.org/10.1007/978-3-030-03009-4_84-2}.

\bibitem{Liu2019Proximal}
{\sc Z.~Liu, S.~Wali, Y.~Duan, H.~Chang, C.~Wu, and X.-C. Tai}, {\em Proximal
  {ADMM} for {Euler’s} elastica based image decomposition model}, Numerical
  Mathematics: Theory, Methods and Applications, 12 (2019), pp.~370--402,
  \url{https://doi.org/10.4208/nmtma.OA-2017-0149}.

\bibitem{Masnou1998Level}
{\sc S.~Masnou and J.-M. Morel}, {\em Level lines based disocclusion}, in
  Proceedings 1998 International Conference on Image Processing. {ICIP}98 (Cat.
  No.98CB36269), {IEEE} Comput. Soc, 1998,
  \url{https://doi.org/10.1109/icip.1998.999016}.

\bibitem{Mumford1994Elastica}
{\sc D.~Mumford}, {\em Elastica and computer vision}, in Algebraic geometry and
  its applications, Springer, 1994, pp.~491--506.

\bibitem{Nesterov1983method}
{\sc Y.~E. Nesterov}, {\em A method for solving the convex programming problem
  with convergence rate {$O(1/k^2)$} (in russian)}, in Dokl. Akad. Nauk SSSR,
  vol.~269, 1983, pp.~543--547.

\bibitem{Nocedal2006Numerical}
{\sc J.~Nocedal and S.~J. Wright}, {\em Numerical Optimization}, Springer New
  York, NY, 2nd~ed., 2006.

\bibitem{MinimizerExistenceTheorem}
{\sc A.~L. Peressini, F.~E. Sullivan, and J.~J. Uhl}, {\em The Mathematics of
  Nonlinear Programming}, Springer-Verlag New York, 1988.

\bibitem{Ringholm2018Variational}
{\sc T.~Ringh{\o}lm, J.~Lazi\'{c}, and C.~Sch\"{o}nlieb}, {\em Variational
  image regularization with {Euler’s} elastica using a discrete gradient
  scheme}, SIAM Journal on Imaging Sciences, 11 (2018), pp.~2665--2691,
  \url{https://doi.org/10.1137/17M1162354}.

\bibitem{Tai2011AugmentedLagrangian}
{\sc X.-C. Tai, J.~Hahn, and G.~J. Chung}, {\em A fast algorithm for {Euler's}
  elastica model using augmented {Lagrangian} method}, SIAM Journal on Imaging
  Sciences, 4 (2011), p.~313–344, \url{https://doi.org/10.1137/100803730}.

\bibitem{Vogel2002Computational}
{\sc C.~R. Vogel}, {\em Computational methods for inverse problems}, SIAM,
  2002.

\bibitem{wang2022efficient}
{\sc C.~Wang, Z.~Zhang, Z.~Guo, T.~Zeng, and Y.~Duan}, {\em Efficient {SAV}
  algorithms for curvature minimization problems}, IEEE Transactions on
  Circuits and Systems for Video Technology,  (2022),
  \url{https://doi.org/10.1109/TCSVT.2022.3217586}.

\bibitem{Wang2004SSIM}
{\sc Z.~Wang, A.~Bovik, H.~Sheikh, and E.~Simoncelli}, {\em Image quality
  assessment: From error visibility to structural similarity}, IEEE
  Transactions on Image Processing, 13 (2004), pp.~600--612,
  \url{https://doi.org/10.1109/TIP.2003.819861}.

\bibitem{wu2010augmented}
{\sc C.~Wu and X.-C. Tai}, {\em Augmented {Lagrangian} method, dual methods,
  and split {Bregman} iteration for {ROF}, vectorial {TV}, and high order
  models}, SIAM Journal on Imaging Sciences, 3 (2010), pp.~300--339,
  \url{https://doi.org/10.1137/090767558}.

\bibitem{RN2Split_Model}
{\sc M.~Yashtini and S.~H. Kang}, {\em A fast relaxed normal two split method
  and an effective weighted {TV} approach for {Euler's} elastica image
  inpainting}, SIAM Journal on Imaging Sciences, 9 (2016), pp.~1552--1581,
  \url{https://doi.org/10.1137/16M1063757}.

\bibitem{Zhang2016New}
{\sc J.~Zhang and K.~Chen}, {\em A new augmented {Lagrangian} primal dual
  algorithm for elastica regularization}, Journal of Algorithms \&
  Computational Technology, 10 (2016), pp.~325--338,
  \url{https://doi.org/10.1177/1748301816668044}.

\bibitem{Zhang2017Fast}
{\sc J.~Zhang, R.~Chen, C.~Deng, and S.~Wang}, {\em Fast linearized augmented
  {Lagrangian} method for {Euler’s} elastica model}, Numerical Mathematics:
  Theory, Methods and Applications, 10 (2017), pp.~98--115,
  \url{https://doi.org/10.4208/nmtma.2017.m1611}.

\bibitem{zhangjie2020bilinear}
{\sc J.~Zhang, Y.~Duan, Y.~Lu, M.~K. Ng, and H.~Chang}, {\em Bilinear
  constraint based {ADMM} for mixed {Poisson-Gaussian} noise removal}, Inverse
  Problems \& Imaging, 15 (2021), pp.~339--366,
  \url{https://doi.org/10.3934/ipi.2020071}.

\bibitem{Zhang2022Image}
{\sc Y.~Zhang, S.~Li, Z.~Guo, B.~Wu, and S.~Du}, {\em Image multiplicative
  denoising using adaptive {Euler’s} elastica as the regularization}, Journal
  of Scientific Computing, 90 (2022), p.~69,
  \url{https://doi.org/10.1007/s10915-021-01721-7}.

\bibitem{Duan_Min_discrete_total_curva}
{\sc Q.~Zhong, Y.~Li, Y.~Yang, and Y.~Duan}, {\em Minimizing discrete total
  curvature for image processing}, in 2020 IEEE/CVF Conference on Computer
  Vision and Pattern Recognition (CVPR), 2020, pp.~9471--9479,
  \url{https://doi.org/10.1109/CVPR42600.2020.00949}.

\bibitem{Zhu2013Image}
{\sc W.~Zhu, X.~C. Tai, and T.~Chan}, {\em Image segmentation using {Euler’s}
  elastica as the regularization}, Journal of Scientific Computing, 57 (2013),
  pp.~414--438, \url{https://doi.org/10.1007/s10915-013-9710-3}.

\end{thebibliography}

\end{document}